\newtheorem{theorem}{Theorem}[section]
\newtheorem{lemma}[theorem]{Lemma}
\newtheorem{proposition}[theorem]{Proposition}
\newtheorem{corollary}[theorem]{Corollary}
\newtheorem{definition}[theorem]{Definition}
\theoremstyle{definition}
\newcommand\tr{ \mbox{Tr} }
\newcommand\diag{\operatorname{diag}}
\begin{document}

\title{L$^p$ spaces of operator-valued functions}

\author[Christopher Ramsey]{Christopher Ramsey}
\address{Department of Mathematics and Statistics, MacEwan University, Edmonton, AB, Canada}
\email{ramseyc5@macewan.ca}

\author{Adam Reeves}
\address{School of Mathematics and Statistics, Carleton University, Ottawa, ON, Canada}
\email{adamreeves@cmail.carleton.ca}


\keywords{Positive operator-valued measure, POVM, Quantum random variable, Operator-valued integration, Schatten norm, Decomposable norm}
\subjclass[2010]{ 
46G10, 
 47L05, 
 81P15 
 }


\maketitle

\begin{abstract}    
We define a $p$-norm in the context of quantum random variables, measurable operator-valued functions with respect to a positive operator-valued measure. This norm leads to a operator-valued $L^p$ space that is shown to be complete. Various other norm candidates are considered as well as generalizations of H\"older's inequality to this new context.
\end{abstract}

\section{Introduction} 

In recent works \cite{PR1,PR2}, the first author and Sarah Plosker have defined the spaces of $L^\infty$ and $L^1$ quantum random variables with respect to an operator-valued measure, meaning measurable operator-valued functions on a nice set. These Banach spaces are suitably well-behaved to look into the possibility of $L^p$ spaces in this context. This paper will discuss these new spaces and the difficulties of trying to establish versions of H\"older's and triangle (Minkowski's) inequalities in this context. One of the main difficulties is the (mis)behaviour of the Loewner (positive) order. Sections 2 and 3 introduce the $p$-norms and their decomposable versions as well as other possibilities for norms in these contexts. Section 4 proves some H\"older inequality analogues but all with extra assumptions.

First we remind the reader of the main objects of study. Suppose that $\mathcal H$ is a finite dimensional or separable Hilbert space, $X$ is a locally compact Hausdorff space and $\mathcal O(X)$ is the $\sigma$-algebra of Borel sets on $X$.

An {\em operator-valued measure (OVM)} $\nu: \mathcal O(X) \rightarrow \mathcal B(\mathcal H)$ is an ultraweakly countably additive function. That is, for every countable collection $\{E_k\}_{k\in\mathbb N} \subset \mathcal O(X)$ of disjoint Borel sets one has
\[
\nu\left(\bigcup_{k\in\mathbb N} E_k\right) = \sum_{k\in\mathbb N} \nu(E_k),
\]
with the sum converging in the ultraweak topology. To define this topology one first needs to define the set of {\em states} or {\em density operators} $\mathcal S(\mathcal H)$, that is the set of all positive, trace-one operators in $\mathcal B(\mathcal H)$. Thus, to every $s\in\mathcal S(\mathcal H)$ one has that $\tr(s\: \cdot)$ is a state on $\mathcal B(\mathcal H)$ in the other sense, a unital positive linear functional. Recall, that there are many more states in this latter sense that do not arise from $\mathcal S(\mathcal H)$. However, the density operators do separate points in $\mathcal B(\mathcal H)$ and norm any positive operator. With all of this in mind, $A_n \rightarrow A$ ultraweakly in $\mathcal B(\mathcal H)$ if and only if 
\[
\tr(sA_n) \rightarrow \tr(sA), \quad \forall s\in\mathcal S(\mathcal H).
\]

An OVM $\nu: \mathcal O(X)\rightarrow \mathcal B(\mathcal H)$ is called a positive, operator-valued measure (POVM) if $\nu(E) \geq 0$ for every $E\in\mathcal O(X).$ Notice that this implies that $\nu$ is a finite measure since $\nu(E) \leq \nu(X) \in \mathcal B(\mathcal H)$, $E\in\mathcal O(X)$. Such POVMs arise as one of the main objects of study in quantum physics and quantum information theory in particular. See \cite{BuschOp} for a canonical presentation of the probabilistic structure of quantum mechanics. In the operator theoretic context these objects have been studied in \cite{HLLL, FPS, FK, FKP, FFP, MPR, PR1} with a lot of development in the past decade due to the growing connections between pure mathematics and quantum theory.

For any full-rank density operator $\rho\in \mathcal S(\mathcal H)$, that is, $\rho$ is a positive, trace-one operator with no null space, define $\nu_\rho(E) := \tr(\rho \nu(E)), E\in \mathcal O(X)$. Then $\nu_\rho$ is a finite, positive measure into $\mathbb C$ that is mutually absolutely continuous with $\nu$. Note that the definition of absolute continuity of measures extends easily to all OVMs as it does not depend on the dimension of the codomain.

Suppose now that $\mathcal H$ has an orthonormal basis $\{e_i\}$, finite or countable. Define $\nu_{ij}(E) := \langle \nu(E)e_j,e_i\rangle, E\in\mathcal O(X)$ which is a finite measure such that $\nu_{ij} \ll_{ac} \nu \sim_{ac} \nu_\rho$. Hence, by the Radon-Nikod\'ym Theorem there exists a unique $\frac{d\nu_{ij}}{d\nu_\rho} \in L^1(X,\nu_\rho)$ such that $\nu_{ij}(E) = \int_E \frac{d\nu_{ij}}{d\nu_\rho} d\nu_\rho$ for all $E\in \mathcal O(X).$ Finally, define the {\em Radon-Nikod\'ym derivative of $\nu$} as 
\[
\frac{d\nu}{d\nu_\rho} := \left[ \frac{d\nu_{ij}}{d\nu_\rho} \right]_{i,j}.
\]
We are only interested in POVMs whose Radon-Nikod\'ym derivative gives a bounded operator a.e., which happens automatically in finite-dimensions.
It was shown in \cite{MPR} that this is the same under any choice of full-rank $\rho$ or orthonormal basis. Moreover, if $\nu$ is a POVM then $\frac{d\nu}{d\nu_\rho}$ maps into the positive operators.

A {\em quantum random variable} $f:X \rightarrow \mathcal B(\mathcal H)$ is a Borel measurable function, that is, for each $s\in \mathcal S(\mathcal H)$ the function $\tr(sf(x))$ is Borel measurable.
Now for each state $s\in \mathcal S(\mathcal H)$ define
\[
f_s(x) := \tr\left(s\left(\frac{d\nu}{d\nu_\rho}(x)\right)^{1/2}f(x)\left(\frac{d\nu}{d\nu_\rho}(x)\right)^{1/2}\right).
\]
A quantum random variable $f:X\rightarrow \mathcal B(\mathcal H)$ is {\em $\nu$-integrable} if and only if $f_s$ is $\nu_\rho$-integrable for every $s\in \mathcal S(\mathcal H)$. Finally, the integral of $f$ with respect to $\nu$ is implicitly defined by
\[
\tr\left( s\int_X f\: d\nu \right) =\int_X f_s \: d\nu_\rho, \quad s\in\mathcal S(\mathcal H).
\]
It is helpful to realize that if $\mu$ is a finite, positive measure on $X$ and $\nu = \mu I_\mathcal H$ then $\frac{d\nu}{d\nu_\rho} = I_\mathcal H$ and integration is just entrywise integration. Refer to \cite{MPR, PR1, PR2} for further reading on quantum random variable integration.

\section{The $L^p$ norm}

Here and in the remainder of the paper we will be assuming that $\nu : \mathcal O(X) \rightarrow \mathcal B(\mathcal H)$ is a POVM where the Radon-Nikod\'ym derivative $\frac{d\nu}{d\nu_\rho}$ is a quantum random variable, maps into bounded operators. 

As mentioned in the introduction, the only $L^p$ spaces of quantum random variables defined previously are those for $p=1$ and $\infty$. First, in \cite{PR1} the von Neumann algebra of all essentially bounded quantum random variables,
\[
L^\infty_\mathcal H(X,\nu) = L^\infty(X,\nu_\rho) \ \bar\otimes \ \mathcal B(\mathcal H)
\]
was introduced. Here, the $\infty$-norm is defined to be the least upper essential bound, 
$\|f\|_\infty \geq \|f(x)\|$ almost everywhere with respect to $\nu$ (or equivalently $\nu_\rho$).
Second, in \cite{PR2} an $L^1$ norm and Banach space were defined which will be given below in the more general $p$-norm definition.
For
\[
\mathcal L_\mathcal H(X,\nu) = \operatorname{span}\{\textrm{positive quantum random variables}\}
\]
we define:

\begin{definition}
If $f\in \mathcal L_\mathcal H(X,\nu)$ consider the set of 4-tuples of positive quantum random variables that combine to give $f$:
\[
Pos_f := \Big\{(f_1,f_2,f_3,f_4)\in \mathcal L_\mathcal H(X,\nu)^4 : f = f_1 - f_2 + if_3 -if_4, f_i\geq 0, 1\leq i\leq 4\Big\}
\]
Now define
\begin{align*}
\|f\|_{p} & = \inf_{Pos_f} \sup_{s\in\mathcal S(\mathcal H)} \left(\int_X (f_1 + f_2 + f_3+ f_4)_s^p d\nu_\rho\right)^{1/p}
\\ &= \inf_{Pos_f} \sup_{s\in\mathcal S(\mathcal H)} \|(f_1 +f_2 + f_3+f_4)_s\|_{p}
\end{align*}
where the latter $p$-norm is that of $L^p(X,\nu_\rho)$.
\end{definition}

When $p=1$ this is indeed the 1-norm defined in \cite{PR2} since
\begin{align*}
\inf_{Pos_f} \sup_{s\in\mathcal S(\mathcal H)} & \left(\int_X \tr(s(f_1 + f_2 + f_3+ f_4)) \: d\nu_\rho \right) 
\\ & \quad \quad = \inf_{Pos_f}\sup_{s\in\mathcal S(\mathcal H)} \tr\left(s\int_X f_1+f_2+f_3+f_4\:  d\nu\right)
\\ &\quad \quad= \inf_{Pos_f} \left\|\int_X f_1+f_2+f_3+f_4 \: d\nu \right\|
\\ &\quad \quad= \|f\|_{1}.
\end{align*}

\begin{proposition}
$\|\cdot\|_{p}$ is a seminorm on $\mathcal L_\mathcal H(X,\nu)$ into the extended reals $[0,\infty]$.
\end{proposition}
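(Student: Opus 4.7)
The plan is to check the three seminorm axioms: non-negativity, absolute homogeneity, and subadditivity, allowing $\|f\|_p = +\infty$ with the standard conventions ($0 \cdot \infty = 0$, $a + \infty = \infty$, etc.). Non-negativity is immediate: for any $(f_1,f_2,f_3,f_4) \in Pos_f$ the sum $f_1+f_2+f_3+f_4$ is a positive quantum random variable, so $(f_1+f_2+f_3+f_4)_s$ is a nonnegative measurable scalar function on $X$; its $L^p(X,\nu_\rho)$ norm, the supremum over states, and finally the infimum over $Pos_f$ are therefore all nonnegative. Note also that $Pos_f$ is nonempty whenever $f \in \mathcal L_\mathcal H(X,\nu)$, since any element of the span of positive quantum random variables admits such a four-term decomposition.

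For homogeneity, I would handle several cases by constructing bijections between the underlying decomposition sets that control the associated sums. For $\lambda \geq 0$, the componentwise scaling $(f_1,f_2,f_3,f_4) \mapsto (\lambda f_1, \lambda f_2, \lambda f_3, \lambda f_4)$ is a bijection $Pos_f \to Pos_{\lambda f}$ scaling each sum by $\lambda$, so $\|\lambda f\|_p = \lambda \|f\|_p$. For $\lambda = -1$, the swap $(f_1,f_2,f_3,f_4) \mapsto (f_2,f_1,f_4,f_3)$ bijects $Pos_f \to Pos_{-f}$ and preserves the sum, giving $\|-f\|_p = \|f\|_p$. For $\lambda = i$, the permutation $(f_1,f_2,f_3,f_4) \mapsto (f_4,f_3,f_1,f_2)$ bijects $Pos_f \to Pos_{if}$, again preserving the sum, hence $\|if\|_p = \|f\|_p$. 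Combining these verifies $\|\lambda f\|_p = |\lambda|\|f\|_p$ at least for $\lambda \in \mathbb{R} \cup i\mathbb{R}$; extending to arbitrary $\lambda \in \mathbb{C}$ is the most delicate part, because a generic rotation $e^{i\theta}$ mixes real and imaginary parts and does not visibly preserve the constrained four-term positive structure.

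For subadditivity, assume $\|f\|_p, \|g\|_p < \infty$ (else the inequality is trivial), and fix $\epsilon > 0$. Pick $(f_1,\ldots,f_4) \in Pos_f$ and $(g_1,\ldots,g_4) \in Pos_g$ whose associated suprema are each within $\epsilon$ of $\|f\|_p$ and $\|g\|_p$ respectively. Then $(f_1+g_1,\ldots,f_4+g_4) \in Pos_{f+g}$, and by linearity of the scalar-valued map $h \mapsto h_s$ in $h$, Minkowski's inequality in the classical $L^p(X,\nu_\rho)$, and sublinearity of the supremum over $\mathcal S(\mathcal H)$,
\[
\|f+g\|_p \le \sup_s \|(f_1+\cdots+f_4)_s\|_p + \sup_s \|(g_1+\cdots+g_4)_s\|_p \le (\|f\|_p + \epsilon) + (\|g\|_p + \epsilon).
\]
Letting $\epsilon \to 0$ gives the triangle inequality, and taking the infimum over $Pos_{f+g}$ on the left-hand side is built into the definition of $\|f+g\|_p$. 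The only genuine obstacle in the proof is the verification of full complex absolute homogeneity; the remaining axioms reduce to combining decompositions and invoking classical Minkowski together with elementary supremum manipulations.
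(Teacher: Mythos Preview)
Your treatment of non-negativity and subadditivity is correct and essentially identical to the paper's: both reduce the triangle inequality to classical Minkowski in $L^p(X,\nu_\rho)$ together with the fact that $(f_1+g_1,\dots,f_4+g_4)\in Pos_{f+g}$ whenever $(f_i)\in Pos_f$ and $(g_i)\in Pos_g$.

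The genuine gap is in homogeneity, and it is not merely an incomplete step but a misaimed target. You set out to prove Euclidean absolute homogeneity $\|\lambda f\|_p = |\lambda|\,\|f\|_p$, correctly verify it for $\lambda\in\mathbb R\cup i\mathbb R$ via bijections on $Pos_f$, and then flag the extension to general complex $\lambda$ as ``the most delicate part.'' In fact that extension fails: $\|\cdot\|_p$ is \emph{not} Euclidean-homogeneous. What the paper actually proves is homogeneity with respect to the $1$-norm on $\mathbb C$, namely $\|\lambda f\|_p = |\lambda|_1\,\|f\|_p$ where $|\lambda|_1 = |\operatorname{Re}\lambda|+|\operatorname{Im}\lambda|$. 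The argument writes $\lambda=\lambda_1-\lambda_2+i\lambda_3-i\lambda_4$ with $\lambda_j\ge 0$, expands the product with $f=f_1-f_2+if_3-if_4$, and observes that the sixteen terms $\lambda_j f_k$ regroup into an element of $Pos_{\lambda f}$ whose total sum is exactly $(\sum_j\lambda_j)(\sum_k f_k)=|\lambda|_1(f_1+f_2+f_3+f_4)$; the reverse inequality comes from running the same argument on $\lambda^{-1}$. Your observation that a generic rotation $e^{i\theta}$ ``does not visibly preserve the constrained four-term positive structure'' is exactly right, and is the reason the constant is $|\lambda|_1$ rather than $|\lambda|$. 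The paper explicitly acknowledges this as a defect of $\|\cdot\|_p$ in Section~3, noting that the decomposable norm $\|\cdot\|_{p,\mathrm{dec}}$ recovers Euclidean homogeneity and calling this ``certainly an improvement on $\|\cdot\|_p$.'' So your partial cases are correct, but the proof cannot be completed in the direction you indicate; you need instead to relax the homogeneity claim to the $1$-norm version.
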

\begin{proof}
Positive semidefinite is automatic. For homogeneity, if $\lambda = \lambda_1 - \lambda_2 + i\lambda_3 - i\lambda_4 \in \mathbb C$ with $\lambda_i\geq 0$ and $(f_1,f_2,f_3,f_4)\in Pos_f$ then we have that the sum of the positive parts of 
$\lambda f  = (\lambda_1 - \lambda_2 +i\lambda_3 - i\lambda_4)(f_1-f_2 +if_3 - if_4)$
is
\begin{align*}
(\lambda_1 + \lambda_2 + \lambda_3+\lambda_4)(f_1+f_2+f_3+f_4) & = (|Re \lambda| + |Im \lambda|)(f_1 + f_2 + f_3 + f_4)
\\ & = |\lambda|_1 (f_1 + f_2 + f_3 + f_4).
\end{align*}
Thus, $\|\lambda f\|_{p} \leq |\lambda|_1\|f\|_{p}$. But this is an equality since if $(g_1, g_2, g_3, g_4) \in Pos_{\lambda f}$ then $f = \lambda^{-1}(g_1 - g_2 + ig_3 -ig_4)$ which can be recombined as the linear combination of positive functions and so all possibilities in the infimum are realized. Hence, $\|\lambda f\|_{p} = |\lambda|_1\|f\|_{p}$ and $\|\cdot\|_{p}$ is homogeneous with respect to the 1-norm on $\mathbb C$.

Now for the triangle inequality, let $f,g\in \mathcal L_\mathcal H(X,\nu)$ with $(f_1,f_2,f_3,f_4)\in Pos_f$ and $(g_1,g_2, g_3,g_4)\in Pos_g$. By the triangle inequality on $L^p(X,\nu_\rho)$ we have
\begin{align*}
\|f+g\|_{p} & \leq \sup_{s\in\mathcal S(\mathcal H)} \left\|\left(\sum_{i=1}^4 f_i + g_i\right)_s\right\|_{p}
\\ &\leq \sup_{s\in\mathcal S(\mathcal H)} \left\|\left(\sum_{i=1}^4 f_i\right)_s\right\|_{p} +  \left\|\left(\sum_{i=1}^4 g_i\right)_s\right\|_{p}
\\ & \leq \sup_{s\in\mathcal S(\mathcal H)} \left\|\left(\sum_{i=1}^4 f_i\right)_s\right\|_{p} + \sup_{s\in\mathcal S(\mathcal H)} \left\|\left(\sum_{i=1}^4 g_i\right)_s\right\|_{p}.
\end{align*}
Taking the infimum over the sets $Pos_f$ and $Pos_g$ results in
\[
\|f+g\|_{p} \leq \|f\|_{p} + \|g\|_{p}
\]
\end{proof}

Let $\mathcal L^p_\mathcal H(X,\nu) = \{f\in \mathcal L_\mathcal H(X,\nu) : \|f\|_{p} < \infty\}$ and $I_p = \{f\in \mathcal L^p_\mathcal H(X,\nu) : \|f\|_{p} = 0\}$. Naturally then, define
\[
L^p_\mathcal H(X,\nu) := \mathcal L^p_\mathcal H(X,\nu)/I_p.
\]
The very nice thing about this $p$-norm is that it allows us a completeness argument.

\begin{theorem}[cf. Theorem 3.12 \cite{PR2}] \label{thm:LpBanach}
$L^p_\mathcal H(X,\nu)$ is a Banach space with respect to the $\|\cdot\|_{p}$ norm.
\end{theorem}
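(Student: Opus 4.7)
My plan is to adapt the standard proof of $L^p$-completeness to this operator-valued setting, following the $p=1$ argument of \cite{PR2}. It suffices to verify that every absolutely $p$-summable series in $L^p_\mathcal{H}(X,\nu)$ is summable, so I would start from a Cauchy sequence $(f^{(n)})$ in $L^p_\mathcal{H}(X,\nu)$ and, after passing to a subsequence, assume $\|f^{(n+1)} - f^{(n)}\|_{p} < 2^{-n}$. By the definition of the $p$-norm as an infimum, for each $n$ I can choose a positive decomposition $(g^{(n)}_1, g^{(n)}_2, g^{(n)}_3, g^{(n)}_4) \in Pos_{f^{(n+1)} - f^{(n)}}$ with
$$\sup_{s\in\mathcal{S}(\mathcal{H})} \big\|(G^{(n)})_s\big\|_{L^p(X,\nu_\rho)} < 2^{-n+1},$$
where $G^{(n)} := g^{(n)}_1 + g^{(n)}_2 + g^{(n)}_3 + g^{(n)}_4$.

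Next I would construct a candidate limit. For each $i\in\{1,2,3,4\}$ the partial sums $\sum_{n=1}^{N} g^{(n)}_i(x)$ are monotone nondecreasing in the Loewner order, and for every state $s$ the scalar series $\sum_{n} (g^{(n)}_i)_s$ is absolutely summable in $L^p(X,\nu_\rho)$, hence convergent in $L^p$ and, by the monotonicity of nonnegative partial sums, almost everywhere as well. Applied to a fixed full-rank $\rho$ and, exploiting separability of $\mathcal{H}$, to a countable dense family of states, this yields a common full-measure set on which $H_i(x) := \sum_n g^{(n)}_i(x)$ converges to a bounded positive operator; $H_i$ is measurable as a pointwise limit of measurable operator-valued functions, and so is a positive quantum random variable. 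I then set
$$f := f^{(1)} + H_1 - H_2 + i H_3 - i H_4.$$

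Finally I would verify $f^{(N)} \to f$ in $\|\cdot\|_p$. The tails $H^{\ge N}_i := \sum_{n \ge N} g^{(n)}_i$ give an element of $Pos_{f - f^{(N)}}$, and the scalar Minkowski inequality on $L^p(X,\nu_\rho)$ bounds
$$\|f - f^{(N)}\|_{p} \;\leq\; \sup_{s} \Big\|\sum_{i=1}^{4} (H^{\ge N}_i)_s \Big\|_{L^p(X,\nu_\rho)} \;\leq\; \sum_{n \ge N} 2^{-n+1} \longrightarrow 0,$$
which proves convergence of the subsequence and hence of the original Cauchy sequence. The main obstacle, as in the $p=1$ case, is the step from scalar $L^p$-convergence for each individual state to honest pointwise operator-norm convergence of the partial sums on a single state-independent full-measure set: one must intersect the null sets coming from a countable dense subset of $\mathcal{S}(\mathcal{H})$ and exploit the Loewner-monotone structure (together with the positivity of $d\nu/d\nu_\rho$) to conclude operator convergence. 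This is exactly the technical heart that was worked out in \cite{PR2}, and I expect the same bookkeeping to carry over with $p$-norms in place of $1$-norms.
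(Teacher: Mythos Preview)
Your proposal is correct and follows essentially the same route as the paper: pass to a rapidly convergent subsequence, telescope via differences, choose positive decompositions of the differences close to the infimum, sum the positive parts using monotone convergence in $L^p(X,\nu_\rho)$, and control the tails to obtain convergence. The paper is in fact terser than you are about the pointwise operator-convergence issue you flag as the main obstacle, simply invoking the Monotone Convergence Theorem and referring back to \cite{PR2} for the bookkeeping.
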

\begin{proof}
The proof is exactly that in \cite{PR2} with minor changes to this new wider context.
Let $\{f_n\}$ be a Cauchy sequence in $L^p_\mathcal H(X,\nu)$.
This implies that there are numbers $\{k_n\}_{n\in \mathbb N}$ such that 
\[
\|f_l - f_m\|_{p} < \frac{1}{2^{n+1}}, \quad \forall l,m \geq k_n.
\]
Since $f_{k_1} \in L^p_\mathcal H(X,\nu)$ there exist $(g_{0,1},g_{0,2},g_{0,3},g_{0,4})\in Pos_{f_{k_1}}$ such that
\[
\left\| g_{0,1} + g_{0,2} + g_{0,3} + g_{0,4}\right\|_{p} < \|f_{k_1}\|_{p} + 1.
\]
Similarly, $f_{k_{n+1}} - f_{k_n} \in L^p_\mathcal H(X,\nu)$ there exists $(g_{n,1},g_{n,2},g_{n,3},g_{n,4})\in Pos_{f_{k_{n+1}} - f_{k_n}}$ such that
\[
\left\| g_{n,1} + g_{n,2} + g_{n,3} + g_{n,4}\right\|_{p} < \|f_{k_{n+1}} - f_{k_n}\|_{p} + \frac{1}{2^{n+1}} < \frac{1}{2^n}.
\]
Now for each $s\in\mathcal S(\mathcal H)$ the sequence $\left(\sum_{n=0}^m g_{n,1} + g_{n,2} + g_{n,3} + g_{n,4}\right)_s$ is increasing almost everywhere on $X$ and is bounded above
\begin{align*}
 \left(\int_X \left(\sum_{n=0}^m g_{n,1} + g_{n,2} + g_{n,3} + g_{n,4}\right)_s^p d\nu_\rho\right)^{1/p}  & \leq \sum_{n=0}^m \left\| g_{n,1} + g_{n,2} + g_{n,3} + g_{n,4}\right\|_{p} 
 \\ & < \|f_{k_1}\|_{p} + \sum_{n=0}^m \frac{1}{2^n} 
 \\ & < \|f_{k_1}\|_{p} + 2. 
\end{align*}
Thus by the Monotone Convergence Theorem, $\sum_{n=0}^\infty g_{n,1} + g_{n,2} + g_{n,3} + g_{n,4} \in L^p_\mathcal H(X,\nu)$ with
\[
\left\| \sum_{n=0}^\infty g_{n,1} + g_{n,2} + g_{n,3} + g_{n,4} \right\|_{p} <  \|f_{k_1}\|_{p} + 2.
\]
An identical argument gives that
\[
\left\| \sum_{n=0}^\infty g_{n,i} \right\|_{p} \leq \left\| \sum_{n=0}^\infty g_{n,1} + g_{n,2} + g_{n,3} + g_{n,4} \right\|_{p}
\]
and so $g_i := \sum_{n=0}^\infty g_{n,i}\geq 0$ is in $L^p_\mathcal H(X,\nu)$. Thus, $g := g_1 - g_2 + ig_3 - ig_4 \in L^p_\mathcal H(X,\nu)$.
Consider now that for each $m\geq 1$, by telescoping, we have that
\begin{align*}
\|g - f_{k_m}\|_{p} & = \left\|f_{k_1} + \sum_{n=1}^\infty (f_{k_{n+1}} - f_{k_n}) - f_{k_m}\right\|_{p}
\\ & = \left\|f_{k_m} + \sum_{n=m}^\infty (f_{k_{n+1}} - f_{k_n}) - f_{k_m}\right\|_{p}
\\ & = \left\| \sum_{n=m}^\infty (f_{k_{n+1}} - f_{k_n}) \right\|_{p}
\\ & < \sum_{n=m}^\infty \frac{1}{2^n}
\\ & = \frac{1}{2^{m-1}}.
\end{align*}
Therefore, $f_{n} \rightarrow g$ with respect to $\|\cdot\|_{p}$ and the conclusion follows.
\end{proof}

There are certainly other possibilities for defining a $p$-norm. One would be to ignore the operator structure of $f$ and look at 
\[
\left\| \int_X \|f(x)\|^p I_\mathcal H d\nu \right\|^{1/p}.
\]
As described in \cite{PR2}, this type of norm is not very useful in an infinite setting. For instance, $f(x) = \sum_{i=n}^\infty 2^{n}E_{n,n} \chi_{(1/2^{n}, 1/2^{n-1}]}$ on $X = [0,1]$ with $\nu = \mu I_{\ell^2(\mathbb N)}$, where $\mu$ is Lebesgue measure, is infinite in the above norm.

Another possibility would be to consider
\begin{align*}
\inf_{Pos_f} \left\|\left(\sum_{i=1}^4 f_i\right)^p \right\|_1^{1/p}& = \inf_{Pos_f}  \left\| \int_X \left(\sum_{i=1}^4 f_i\right)^p \: d\nu \right\|^{1/p} 
\\ & = \inf_{Pos_f} \sup_{s\in\mathcal S(\mathcal H)} \left(\int_X \tr\left( s\frac{d\nu}{d\nu_\rho}^{1/2}\left(\sum_{i=1}^4 f_i\right)^p\frac{d\nu}{d\nu_\rho}^{1/2} \right) d\nu_\rho\right)^{1/p}
\end{align*}
as a $p$-norm candidate. This is natural to consider as this is the 1-norm when $p=1$. 
Unfortunately, this probably fails the triangle inequality outside of the cases $p=1$ and $2$. We say ``probably'' since this doesn't seem to be known.

\subsection{Schatten-type norms}

A final class of potential norms to consider are those arising from a combination of Schatten norms and classical integral norms.

Recall, that in $\mathcal B(\mathcal H)$, $\|A\|_{S^p} = \tr(|A|^p)^{1/p}$ is called the Schatten $p$-norm. When $p=1$ this is called the trace norm and when $p=2$ it is called the Hilbert-Schmidt norm. The operators which have a finite $p$-norm are called the Schatten class for $p$ and are non-closed (in the operator norm) ideals of the compact operators. These are norms because there are Young's, H\"older's and Minkowski's inequalities in this context. See \cite{Dell} for further background on these norms and their classes of operators. 

Consider now this new family of seminorms:
\begin{align*}
\|f\|_{S^p, L^q} & = \sup_{s\in \mathcal S(\mathcal H)}\left(\int_X \tr\left(\left|s^{1/2}\frac{d\nu}{d\nu_\rho}^{1/2}f\frac{d\nu}{d\nu_\rho}^{1/2}s^{1/2}\right|^p\right)^{q/p} d\nu_\rho\right)^{1/q}
\\ & =  \sup_{s\in \mathcal S(\mathcal H)} \left\| \left\| s^{1/2}\frac{d\nu}{d\nu_\rho}^{1/2}f\frac{d\nu}{d\nu_\rho}^{1/2}s^{1/2} \right\|_{S^p} \right\|_{L^q}.
\end{align*}

\begin{lemma}
$\|\cdot\|_{S^p, L^q}$ is a seminorm on $\mathcal L_\mathcal H(X,\nu)$ with $\|f\|_{S^p, L^q} \leq \|f\|_{S^r, L^q}$, for $p\geq r$. Furthermore, $\|f\|_{S^1,L^q} \leq \|f\|_q$.
\end{lemma}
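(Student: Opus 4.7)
The plan is to verify the three claims separately, treating the seminorm axioms first, then the monotonicity in the Schatten index, and finally the comparison with $\|\cdot\|_q$, which I expect to be the main step. Throughout I will abbreviate $D := \tfrac{d\nu}{d\nu_\rho}$ and set $T_s(f)(x) := s^{1/2}D(x)^{1/2}f(x)D(x)^{1/2}s^{1/2}$, so that $\|f\|_{S^p,L^q} = \sup_{s} \bigl\| \|T_s(f)(\cdot)\|_{S^p} \bigr\|_{L^q}$. Note that $T_s$ is linear in $f$ and sends positive quantum random variables to positive operator-valued functions for each fixed $s \in \mathcal S(\mathcal H)$.

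For the seminorm axioms, nonnegativity is automatic. Homogeneity follows because $|\lambda A| = |\lambda|\,|A|$ and $\tr(|\lambda A|^p) = |\lambda|^p \tr(|A|^p)$, so $|\lambda|$ factors cleanly out of both the inner Schatten norm and the outer $L^q$ norm. For the triangle inequality, I will apply Minkowski's inequality for the Schatten $p$-norm pointwise in $x$ to $T_s(f+g) = T_s(f) + T_s(g)$, then classical Minkowski for $L^q$, and finally take the supremum over $s$ on each side.

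For the monotonicity $p \geq r \Rightarrow \|f\|_{S^p,L^q} \leq \|f\|_{S^r,L^q}$, I will invoke the standard fact that $\|A\|_{S^p}$ is non-increasing in $p \in [1,\infty]$ for every compact operator $A$, since this is just monotonicity of the $\ell^p$-norms of the singular value sequence of $A$. Applying this pointwise in $x$ with $A = T_s(f)(x)$, then monotonicity of the $L^q$-integral, and taking the sup over $s$ yields the claim.

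The comparison with $\|\cdot\|_q$ is the heart of the lemma. I will fix any decomposition $(f_1,f_2,f_3,f_4) \in Pos_f$ and apply the Schatten 1-norm triangle inequality to obtain
\[
\|T_s(f)(x)\|_{S^1} \leq \sum_{i=1}^4 \|T_s(f_i)(x)\|_{S^1}.
\]
The crucial observation is that each $f_i \geq 0$ forces $T_s(f_i)(x)$ to be a positive operator, so $\|T_s(f_i)(x)\|_{S^1} = \tr(T_s(f_i)(x)) = \tr\bigl(sD(x)^{1/2}f_i(x)D(x)^{1/2}\bigr) = (f_i)_s(x)$ by the cyclicity of the trace and the definition of $(f_i)_s$. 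Hence the right-hand side equals $(f_1+f_2+f_3+f_4)_s(x)$ a.e.; taking $\|\cdot\|_{L^q}$, then the sup over $s$, then the infimum over $Pos_f$, completes the proof. I expect the only subtle step to be this identification $\|T_s(f_i)(x)\|_{S^1} = (f_i)_s(x)$, which relies essentially on the positivity hypothesis on $f_i$; the remaining inequalities are routine applications of standard Schatten and $L^q$ inequalities.
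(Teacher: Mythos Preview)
Your proposal is correct and follows essentially the same approach as the paper's own proof: the seminorm property via composition of Schatten and $L^q$ norms with the linear map $T_s$ and the supremum over states, the monotonicity in $p$ from the corresponding monotonicity of the Schatten norms, and the comparison $\|f\|_{S^1,L^q}\leq\|f\|_q$ by fixing $(f_1,f_2,f_3,f_4)\in Pos_f$, applying the $S^1$ triangle inequality, and then using positivity of each $T_s(f_i)(x)$ to identify $\|T_s(f_i)(x)\|_{S^1}=\tr\bigl(T_s(f_i)(x)\bigr)=(f_i)_s(x)$ before taking $L^q$, $\sup_s$, and the infimum over $Pos_f$. Your write-up simply spells out in more detail what the paper leaves implicit.
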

\begin{proof}
It is immediate that for any fixed $s\in \mathcal S(\mathcal H)$ that
\[
f \mapsto \left\| \left\| s^{1/2}\left(\frac{d\nu}{d\nu_\rho}(x)\right)^{1/2}f(x)\left(\frac{d\nu}{d\nu_\rho}(x)\right)^{1/2}s^{1/2} \right\|_{S^p} \right\|_{L^q}\] is a seminorm. Thus, $\|\cdot\|_{S^p, L^q}$ is the supremum of seminorms and so is a seminorm itself.

Monotonicity follows since the Schatten norms have this monotonic property.

For the last inequality, suppose $f\in\mathcal L_\mathcal H(X,\nu)$. For any $(f_1,f_2,f_3,f_4)\in Pos_f$ we have
\begin{align*}
\|f\|_{S^1, L^q} & = \sup_{s\in\mathcal S(\mathcal H)} \left\| \left\|s^{1/2}\frac{d\nu}{d\nu_\rho}^{1/2}(f_1-f_2 + if_3 - if_4)\frac{d\nu}{d\nu_\rho}^{1/2}s^{1/2} \right\|_{S^1} \right\|_{L^q}
\\ & \leq \sup_{s\in\mathcal S(\mathcal H)} \left\| \sum_{i=1}^4 \left\|s^{1/2}\frac{d\nu}{d\nu_\rho}^{1/2}f_i\frac{d\nu}{d\nu_\rho}^{1/2}s^{1/2} \right\|_{S^1} \right\|_{L^q}
\\ & = \sup_{s\in\mathcal S(\mathcal H)} \left\|\tr\left(s^{1/2}\frac{d\nu}{d\nu_\rho}^{1/2}(f_1+f_2+f_3+f_4)\frac{d\nu}{d\nu_\rho}^{1/2}s^{1/2} \right) \right\|_{L^q}
\\ & = \|f_1 +f_2+f_3+f_4\|_{q}.
\end{align*}
Taking the infimum over all such positive decompositions yields the desired result.
\end{proof}

One of the main difficulties with this seminorm is that there is no clear path to showing that it would lead to a Banach space (at least to the authors). A possiblity would be to show that $\|\cdot\|_p$ and $\|\cdot\|_{S^1,L^p}$ are comparable, though this may prove to not be true.

\section{The decomposable $p$-norm}

In 1985 Haagerup \cite{Haag} introduced the decomposition (or decomposable) norm for the completely bounded maps that are in the span of the completely positive maps. In particular, for such a map $u : \mathcal A\rightarrow \mathcal B$ he defined
\[
\|u\|_{dec} = \inf_{S_1,S_2}\{\max\{\|S_1\|, \|S_2\|\}\}
\]
where $S_1$ and $S_2$ are completely positive maps such that 
\[
a \mapsto \left[\begin{matrix} S_1(a) & u(a) \\ u(a^*)^* & S_2(a) \end{matrix}\right]
\]
is a completely positive map. For further reference see Chapter 6 of Pisier's book \cite{Pisier}. Junge and Ruan \cite{JungeRuan} used this to define the decomposable norm of their non-commutative L$^p$ space, $L^p(\mathcal M)$ for a von Neumann algebra $\mathcal M$. The original norm that this decomposable norm is formed from is the Schatten $p$-norm. 

In the same way, we can use these ideas to create a decombosable norm in our context. 

\begin{definition}
 If $f\in \mathcal L_\mathcal H(X,\nu)$ then define 
\begin{align*}
\|f\|_{p, dec} = \inf\Bigg\{ \max\{ \|S_1\|_{p}, \|S_2\|_{p} \} : S_1,S_2\in \mathcal L_\mathcal H(X,\nu), S_1,S_2\geq 0,
\\ \left[\begin{matrix} S_1(x) & f(x) \\ f(x)^* & S_2(x) \end{matrix}\right] \geq 0 \ \textrm{a.e.} \Bigg\}
\end{align*}
\end{definition}

Proving that this is a seminorm doesn't depend on the norm one starts with. For completeness we provide the proof but one can equally find something similar in \cite{Haag} or \cite{Pisier}.

\begin{proposition}
$\|\cdot\|_{p, dec}$ is a seminorm on $\mathcal L_\mathcal H(X,\nu)$.
\end{proposition}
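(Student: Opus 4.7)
The plan is to follow the standard Haagerup/Pisier template for decomposable norms, translated into the POVM setting of the paper. Three properties must be verified: vanishing at zero, scalar homogeneity, and the triangle inequality. Positive semidefiniteness is built into the definition, and choosing $S_1=S_2=0$ exhibits an admissible decomposition of $f=0$, so $\|0\|_{p,dec}=0$ is immediate.

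For scalar homogeneity $\|\lambda f\|_{p,dec}=|\lambda|\,\|f\|_{p,dec}$ with $\lambda\in\mathbb C$, I would begin with an admissible pair $(S_1,S_2)$ for $f$. For $\lambda\neq 0$, conjugate the block function pointwise by the scalar diagonal
\[
D=\diag\bigl(\sqrt{|\lambda|}\,I_\H,\ \sqrt{|\lambda|}\,e^{i\arg\lambda}\,I_\H\bigr).
\]
A direct computation shows $D^*MD$ has diagonal entries $|\lambda|S_1,\,|\lambda|S_2$ and off-diagonal entry $\lambda f$, and conjugation by an invertible scalar diagonal preserves pointwise positivity. Hence $(|\lambda|S_1,|\lambda|S_2)$ is admissible for $\lambda f$. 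Since the underlying $p$-seminorm satisfies $\|tS\|_p=t\|S\|_p$ for $t\geq 0$ real (because $|t|_1=t$ in that case), taking the infimum yields $\|\lambda f\|_{p,dec}\leq|\lambda|\,\|f\|_{p,dec}$. The reverse follows by applying the same bound to $f=\lambda^{-1}(\lambda f)$; the case $\lambda=0$ is trivial.

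For the triangle inequality I would fix $\varepsilon>0$ and choose near-optimal admissible pairs $(S_1^f,S_2^f)$ and $(S_1^g,S_2^g)$ for $f$ and $g$. The sum of two a.e.\ positive $2\times 2$ block matrices is a.e.\ positive, so $(S_1^f+S_1^g,S_2^f+S_2^g)$ is admissible for $f+g$. Combining the triangle inequality for $\|\cdot\|_p$ from the preceding proposition with the elementary bound $\max\{a+c,b+d\}\leq\max\{a,b\}+\max\{c,d\}$ for nonnegative reals gives
\[
\|f+g\|_{p,dec}\leq\max\{\|S_1^f\|_p,\|S_2^f\|_p\}+\max\{\|S_1^g\|_p,\|S_2^g\|_p\}<\|f\|_{p,dec}+\|g\|_{p,dec}+2\varepsilon,
\]
and letting $\varepsilon\to 0$ closes the argument.

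No step is a serious obstacle: the proof is essentially a copy of Haagerup's original argument, the only care being that everything is interpreted pointwise a.e.\ with respect to $\nu_\rho$. I would also note, for consistency with the earlier treatment of $\|\cdot\|_p$ as a seminorm into $[0,\infty]$, that the infimum over admissible decompositions is understood as $+\infty$ when no such pair exists in $\mathcal L_\H(X,\nu)$.
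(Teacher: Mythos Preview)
Your proof is correct and follows essentially the same Haagerup/Pisier template as the paper: a diagonal conjugation for homogeneity (you use $\diag(\sqrt{|\lambda|},\sqrt{|\lambda|}e^{i\arg\lambda})$ where the paper uses $\diag(\lambda^{1/2},\overline{\lambda^{1/2}})$, but both produce the same off-diagonal entry), and the sum-of-positive-blocks argument together with $\max\{a+c,b+d\}\leq\max\{a,b\}+\max\{c,d\}$ for the triangle inequality. Your explicit remark that $\|tS\|_p=t\|S\|_p$ only needs positive-real homogeneity of $\|\cdot\|_p$ (where the $|\cdot|_1$-homogeneity issue disappears) is a nice clarification the paper leaves implicit.
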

\begin{proof}
Positive semidefinite is automatic. For $\lambda\in \mathbb C, \lambda\neq 0$ let $\lambda^{1/2}$ be the principle square root and then
\[
\diag(\lambda^{1/2},\overline{\lambda^{1/2}})\left[\begin{matrix} S_1(x) & f(x) \\ f(x)^* & S_2(x) \end{matrix}\right]\diag(\overline{\lambda^{1/2}},\lambda^{1/2}) = \left[\begin{matrix} |\lambda|S_1(x) & \lambda f(x) \\ (\lambda f(x))^* & |\lambda|S_2(x) \end{matrix}\right].
\]
Thus, it is immediate that $\|\lambda f\|_{p, dec} = |\lambda|\|f\|_{p, dec}$. This gives that the dec-norm is homogeneous with respect to the usual Euclidean norm instead of the 1-norm on $\mathbb C$, certainly an improvement on $\|\cdot\|_{p}$.

Finally, if $f,g\in \mathcal L_\mathcal H(X,\nu)$ and $S_1,S_2,T_1,T_2\in \mathcal L_\mathcal H(X,\nu)$ are positive such that 
\[
\left[\begin{matrix} S_1 & f \\ f^* & S_2 \end{matrix}\right], \ \left[\begin{matrix} T_1 & g \\ g^* & T_2 \end{matrix}\right] \geq 0
\]
then 
\[
\left[\begin{matrix} S_1+ T_1 & f+g \\ (f+g)^* & S_2+T_2 \end{matrix}\right] \geq 0
\]
and
\[
\max\{ \|S_1 + T_1\|_{p}, \|S_2 + T_2\|_{p}\} \leq \max\{\|S_1\|_{p},\|S_2\|_{p}\} + \max\{\|T_1\|_{p},\|T_2\|_{p}\}.
\]
Taking infimums we get that $\|f+g\|_{p, dec} \leq \|f\|_{p, dec} + \|g\|_{p, dec}$.
\end{proof}

We now establish that the ``nice'' properties of the $\|\cdot\|_p$ norm are inherited by the decomposable $p$-norm by way of comparability.

\begin{proposition}
If $f\in\mathcal L_\mathcal H(X,\nu)$ then $\|f^*\|_{p, dec} = \|f\|_{p, dec}$ and if
$f = f^*$ then $\|f\|_{p, dec}= \|f\|_{p}$. 
\end{proposition}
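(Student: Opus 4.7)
The plan is to treat the two claims separately. For $\|f^*\|_{p,dec}=\|f\|_{p,dec}$, I would exploit the symmetry built into the block matrix. Conjugating the defining block matrix by the self-adjoint unitary $\left[\begin{smallmatrix} 0 & I \\ I & 0 \end{smallmatrix}\right]$ swaps diagonal and off-diagonal entries:
\[
\left[\begin{matrix} 0 & I \\ I & 0 \end{matrix}\right]\left[\begin{matrix} S_1 & f \\ f^* & S_2 \end{matrix}\right]\left[\begin{matrix} 0 & I \\ I & 0 \end{matrix}\right] = \left[\begin{matrix} S_2 & f^* \\ f & S_1 \end{matrix}\right].
\]
This gives a positivity-preserving bijection $(S_1,S_2)\mapsto (S_2,S_1)$ between feasible pairs for $f$ and feasible pairs for $f^*$. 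Since $\max\{\|S_1\|_p,\|S_2\|_p\}$ is symmetric in $S_1,S_2$, the two infima coincide.

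For the second claim I would prove both inequalities. For $\|f\|_p\le \|f\|_{p,dec}$, given a feasible pair $(S_1,S_2)$ for self-adjoint $f$, test the positive block matrix against the vectors $(\xi,\pm\xi)$ to obtain $\pm 2f\le S_1+S_2$ pointwise a.e. Then set
\[
f_1=\tfrac{1}{2}\!\left(\tfrac{S_1+S_2}{2}+f\right),\qquad f_2=\tfrac{1}{2}\!\left(\tfrac{S_1+S_2}{2}-f\right),
\]
which are positive and satisfy $f=f_1-f_2$, so $(f_1,f_2,0,0)\in Pos_f$. Hence
\[
\|f\|_p \le \sup_{s}\|(f_1+f_2)_s\|_p = \left\|\tfrac{S_1+S_2}{2}\right\|_p \le \tfrac{1}{2}(\|S_1\|_p+\|S_2\|_p)\le \max\{\|S_1\|_p,\|S_2\|_p\},
\]
where the penultimate step uses the triangle inequality and homogeneity of $\|\cdot\|_p$ proved earlier. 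Taking the infimum over feasible $(S_1,S_2)$ gives the inequality.

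For the reverse direction $\|f\|_{p,dec}\le \|f\|_p$, take any $(f_1,f_2,f_3,f_4)\in Pos_f$. The identity $f=f^*$ forces $f_3=f_4$ and $f=f_1-f_2$. I would then set $S_1=S_2=f_1+f_2$ and verify the block matrix is positive via the factorization
\[
\left[\begin{matrix} I & I \\ I & -I \end{matrix}\right]\left[\begin{matrix} f_1 & 0 \\ 0 & f_2 \end{matrix}\right]\left[\begin{matrix} I & I \\ I & -I \end{matrix}\right] = \left[\begin{matrix} f_1+f_2 & f_1-f_2 \\ f_1-f_2 & f_1+f_2 \end{matrix}\right],
\]
whose outer factor is self-adjoint. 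Hence $(S_1,S_2)$ is feasible and
\[
\|f\|_{p,dec}\le \|f_1+f_2\|_p \le \|f_1+f_2+f_3+f_4\|_p,
\]
with the last inequality using that adding positive quantum random variables to a positive one only increases $(\cdot)_s$ pointwise, hence the $L^p$-norm of $(\cdot)_s$. Taking the infimum over $Pos_f$ completes the argument.

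The main obstacle is checking the auxiliary fact that for $S\ge 0$ one has $\|S\|_p=\sup_s\|S_s\|_p$, which underlies both directions; this follows because every element of $Pos_S$ dominates the trivial decomposition $(S,0,0,0)$ as an operator (by self-adjointness of $S$ and positivity of the $f_i$), and the map $T\mapsto T_s$ is linear and positivity-preserving, so monotonicity of the $L^p(X,\nu_\rho)$-norm on nonnegative functions finishes it. Everything else is a direct block-matrix computation.
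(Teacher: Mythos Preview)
Your proof is correct and follows essentially the same route as the paper. The first claim is handled identically (conjugation by the swap matrix), and for the self-adjoint case both directions match the paper's argument: testing the block matrix against $(\xi,\pm\xi)$ is exactly the content of the cited \cite[Lemma 1.37]{Pisier}, and your congruence factorization $\left[\begin{smallmatrix} I & I \\ I & -I\end{smallmatrix}\right]\diag(f_1,f_2)\left[\begin{smallmatrix} I & I \\ I & -I\end{smallmatrix}\right]$ verifies the same positivity that the paper obtains by writing the block matrix as the sum $\left[\begin{smallmatrix} f_1 & f_1 \\ f_1 & f_1\end{smallmatrix}\right]+\left[\begin{smallmatrix} f_2 & -f_2 \\ -f_2 & f_2\end{smallmatrix}\right]$. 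Your explicit auxiliary fact that $\|S\|_p=\sup_s\|S_s\|_p$ for $S\ge 0$ and the observation that $f=f^*$ forces $f_3=f_4$ fill in details the paper leaves implicit when it writes ``taking infimums gives one direction.''
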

\begin{proof}
These arguments follow very similarly to \cite[Chapter 6]{Pisier}.
The first statement follows from the fact that
\[
\left[\begin{matrix} S_2 & f^* \\ f & S_1 \end{matrix}\right] = \left[\begin{matrix} 0 & I \\ I & 0 \end{matrix}\right]\left[\begin{matrix} S_1 & f \\ f^* & S_2 \end{matrix}\right]\left[\begin{matrix} 0 & I \\ I & 0 \end{matrix}\right].
\]
Now suppose that $f=f^*$. For any combination $f=f_1 - f_2$ such that $f_1,f_2\in \mathcal L_\mathcal H(X,\nu)$ are positive we have 
\[
\left[\begin{matrix} f_1 + f_2 & f \\ f & f_1 + f_2 \end{matrix}\right] = \left[\begin{matrix} f_1  & f_1 \\ f_1 & f_1 \end{matrix}\right] + \left[\begin{matrix} f_2 & -f_2 \\ -f_2 & f_2 \end{matrix}\right].
\]
Thus, $\|f\|_{p, dec} \leq \|f_1+f_2\|_{p}$ and taking infimums gives one direction.

For the other direction assume that $\left[\begin{matrix} S_2(x) & f(x) \\ f(x) & S_1(x) \end{matrix}\right] \geq 0$ in $M_2(\mathcal B(\mathcal H))$ a.e. By \cite[Lemma 1.37]{Pisier} 
\[
|\langle f\xi,\xi\rangle| \leq \frac{1}{2}\langle S_1\xi,\xi\rangle + \frac{1}{2}\langle S_2\xi,\xi\rangle, \quad \forall \xi\in \mathcal H
\]
which implies that
\[
\left\langle \left(\frac{1}{2}(S_1 + S_2) \pm f\right)\xi,\xi \right\rangle\geq 0, \quad \forall \xi\in\mathcal H.
\]
Thus,
\begin{align*}
\|f\|_{p} & = \left\| \frac{1}{2}\left(\frac{1}{2}(S_1 + S_2) + f\right) - \frac{1}{2}\left(\frac{1}{2}(S_1 + S_2) - f\right) \right\|_{p}
\\ & \leq \left\|\frac{1}{2}\left(\frac{1}{2}(S_1 + S_2) + f\right) + \frac{1}{2}\left(\frac{1}{2}(S_1 + S_2) - f\right) \right\|_{p}
\\ & = \left\|\frac{1}{2}(S_1 + S_2)\right\|_{p}
\\ & \leq \max\{\|S_1\|_{p},\|S_2\|_{p}\}
\end{align*}
Therefore, $\|f\|_{p} \leq \|f\|_{p, dec}$.
\end{proof}

\begin{proposition}
For $f\in\mathcal L_\mathcal H(X,\nu)$, $\|Re\:f\|_{p},\|Im\:f\|_{p}\leq \|f\|_{p}$ and $\|Re\:f\|_{p,dec}$, $\|Im\:f\|_{p,dec}\leq \|f\|_{p,dec}$. Moreover, 
\[
\frac{1}{2}\|f\|_{p} \leq \|f\|_{p,dec} \leq 2\|f\|_{p}.
\]
\end{proposition}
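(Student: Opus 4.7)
The plan is to prove the three claims in sequence. For the first, $\|Re\:f\|_p, \|Im\:f\|_p \leq \|f\|_p$, I would argue directly from the definition: if $(f_1,f_2,f_3,f_4)\in Pos_f$ then $(f_1,f_2,0,0)\in Pos_{Re\:f}$ and $(f_3,f_4,0,0)\in Pos_{Im\:f}$, since $Re\:f = f_1-f_2$ and $Im\:f = f_3-f_4$. Because each $(f_i)_s\geq 0$ almost everywhere, dropping the last two (respectively first two) summands can only decrease the $L^p$-integrand, so taking the supremum over $s$ and then the infimum over $Pos_f$ yields both inequalities.

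For the decomposable analogue, suppose $S_1,S_2 \geq 0$ with $\left[\begin{matrix} S_1 & f \\ f^* & S_2 \end{matrix}\right]\geq 0$. The swap-by-$\left[\begin{matrix} 0 & I \\ I & 0 \end{matrix}\right]$ trick from the previous proposition also gives $\left[\begin{matrix} S_2 & f^* \\ f & S_1 \end{matrix}\right]\geq 0$, and adding these and using $f+f^*=2\,Re\:f$ produces $\left[\begin{matrix} T & Re\:f \\ Re\:f & T \end{matrix}\right]\geq 0$ with $T=(S_1+S_2)/2\geq 0$. Since $\|T\|_p \leq \max\{\|S_1\|_p,\|S_2\|_p\}$, the infimum over admissible pairs yields $\|Re\:f\|_{p,dec}\leq \|f\|_{p,dec}$. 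For the imaginary part I would avoid a second matrix manipulation by invoking the Euclidean-modulus homogeneity of $\|\cdot\|_{p,dec}$ from the previous proposition: since $Re(-if)=Im\:f$,
\[
\|Im\:f\|_{p,dec} = \|Re(-if)\|_{p,dec} \leq \|-if\|_{p,dec} = \|f\|_{p,dec}.
\]

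Finally, for the comparability $\frac{1}{2}\|f\|_p \leq \|f\|_{p,dec}\leq 2\|f\|_p$ I would combine the decomposition $f = Re\:f + i\,Im\:f$ with the earlier identification $\|g\|_{p,dec}=\|g\|_p$ on self-adjoint $g$. The upper bound is
\[
\|f\|_{p,dec} \leq \|Re\:f\|_{p,dec} + \|Im\:f\|_{p,dec} = \|Re\:f\|_p + \|Im\:f\|_p \leq 2\|f\|_p,
\]
using the triangle inequality and $|i|=1$ for the Euclidean-modulus homogeneity of $\|\cdot\|_{p,dec}$, then self-adjointness, then step one. The lower bound is symmetric: the triangle inequality and the $1$-norm homogeneity of $\|\cdot\|_p$ (under which $|i|_1=1$) give $\|f\|_p\leq \|Re\:f\|_p+\|Im\:f\|_p$, which by self-adjointness and step two is at most $2\|f\|_{p,dec}$. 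The main subtlety I anticipate is in step two for $Im\:f$: a matrix-only approach would conjugate by $\operatorname{diag}(1,-i)$ and redo the symmetrization, but the scalar-homogeneity shortcut bypasses it entirely.
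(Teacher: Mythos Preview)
Your proposal is correct and follows essentially the same approach as the paper: positive decompositions and monotonicity for step one, the symmetrize-by-swap matrix averaging for step two, and the triangle inequality combined with the self-adjoint identity $\|g\|_{p,dec}=\|g\|_p$ for the comparability. The only cosmetic difference is your handling of $Im\:f$ in step two via the homogeneity shortcut $Im\:f = Re(-if)$; the paper simply says ``similarly,'' presumably redoing the matrix manipulation with a $\operatorname{diag}(1,-i)$ conjugation, but both arrive at the same bound.
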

\begin{proof}
Suppose $(f_1,f_2,f_3,f_4) \in Pos_f$. Then
\[
\|Re\:f\|_{p} \leq \|f_1 + f_2\|_{p} \leq \|f_1+f_2+f_3+f_4\|_{p}
\]
since $(f_1+f_2)_s \leq (f_1 + f_2 + f_3 + f_4)_s$ for all $s\in \mathcal S(\mathcal H)$.
Taking the infimum over $Pos_f$ we get  $\|Re\: f\|_{p} \leq \|f\|_{p}$. The imaginary part follows similarly.

For the dec-norm suppose that $S_1,S_2\in\mathcal L_\mathcal H(X,\nu), S_1,S_2\geq 0$ with $\left[\begin{matrix} S_1 & f \\ f^* & S_2 \end{matrix}\right]\geq 0$. This implies
\[
0\leq \frac{1}{2}\left[\begin{matrix} S_1 & f \\ f^* & S_2 \end{matrix}\right] + \frac{1}{2}\left[\begin{matrix} S_2 & f^* \\ f & S_1 \end{matrix}\right] = \left[\begin{matrix} \frac{1}{2}(S_1+S_2) & Re\:f \\ Re\:f & \frac{1}{2}(S_1+S_2) \end{matrix}\right]
\]
and so
\[
\|Re\:f\|_{p,dec} \leq \frac{1}{2}\|S_1+S_2\|_{p} \leq \max\{\|S_1\|_{p},\|S_2\|_{p}\}.
\]
Taking the infimum gives $\|Re\:f\|_{p,dec} \leq \|f\|_{p,dec}$. Again, the imaginary part follows similarly.

Lastly, the comparability of the norms follows immediately by the triangle inequality, the agreement of the two norms on self-adjoint terms, and the arguments above.
\end{proof}

\begin{corollary}
$L^p_\mathcal H(X,\nu)$ is a Banach space under the $\|\cdot\|_{p,dec}$ norm. 
\end{corollary}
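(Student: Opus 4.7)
The plan is to reduce this immediately to \Cref{thm:LpBanach} by invoking the equivalence of norms established in the preceding proposition, namely $\tfrac{1}{2}\|f\|_p \le \|f\|_{p,dec} \le 2\|f\|_p$. Equivalent norms on a vector space induce the same topology, so in particular the same Cauchy sequences and the same convergent sequences; hence completeness transfers automatically from one norm to the other.

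Before invoking that principle, I would just verify that the underlying quotient vector space is unambiguous. First, the bound $\|f\|_{p,dec} \le 2\|f\|_p$ shows that any $f \in \mathcal L^p_\mathcal H(X,\nu)$ automatically satisfies $\|f\|_{p,dec} < \infty$, and the reverse bound $\|f\|_p \le 2\|f\|_{p,dec}$ shows that no additional functions enter when one instead imposes finiteness of the decomposable $p$-norm. Likewise the two-sided estimate forces $\|f\|_p = 0$ if and only if $\|f\|_{p,dec} = 0$, so the null ideal $I_p$ is the same whether defined with respect to $\|\cdot\|_p$ or $\|\cdot\|_{p,dec}$. Consequently $L^p_\mathcal H(X,\nu) = \mathcal L^p_\mathcal H(X,\nu)/I_p$ is a well-defined normed space under either norm, and the natural identity map between the two is a bi-Lipschitz isomorphism with constants $\tfrac{1}{2}$ and $2$.

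Given this identification, the argument is short: by \Cref{thm:LpBanach}, $(L^p_\mathcal H(X,\nu), \|\cdot\|_p)$ is complete; any sequence that is Cauchy with respect to $\|\cdot\|_{p,dec}$ is Cauchy with respect to $\|\cdot\|_p$ (by the lower bound), hence converges in $\|\cdot\|_p$, and the upper bound then yields convergence in $\|\cdot\|_{p,dec}$ to the same limit. There is no real obstacle here; the work has already been done in proving the comparability of the two norms, and the corollary is essentially a formality once that is in hand.
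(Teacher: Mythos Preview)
Your proposal is correct and takes exactly the same approach as the paper, which simply states that the result ``is a direct consequence of the comparability of the norms and Theorem~\ref{thm:LpBanach}.'' You have merely spelled out in more detail the standard argument that equivalent norms share Cauchy and convergent sequences (and, helpfully, that the quotient space is unambiguous), but the underlying idea is identical.
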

\begin{proof}
This is a direct consequence of the comparability of the norms and Theorem \ref{thm:LpBanach}.
\end{proof}

Finally, one can estimate the decomposable $p$-norm by the operator absolute value of the function and its adjoint. 

\begin{lemma}\label{lemma:absolutevalue}
Suppose $f\in\mathcal L_\mathcal H(X,\nu)$ has polar decompositions $f = u|f| = |f^*|u$ with partial isometry $u\in \mathcal L_\mathcal H(X,\nu)$. Then
\[
\|f\|_{p,dec} \leq \max\Big\{ \| |f| \|_{p}, \||f^*|\|_{p}\Big\}.
\]
\end{lemma}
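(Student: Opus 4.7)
The plan is to exhibit an explicit choice of $S_1, S_2$ in the definition of $\|\cdot\|_{p,dec}$ that realizes the claimed bound, namely $S_1 = |f^*|$ and $S_2 = |f|$. Modulo the (standard) measurability of $|f|$, $|f^*|$ and $u$ as quantum random variables (via the Borel functional calculus applied pointwise), both $|f|$ and $|f^*|$ are positive quantum random variables and hence lie in $\mathcal L_\mathcal H(X,\nu)$.

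The only thing to verify is that the block matrix
\[
M(x) := \left[\begin{matrix} |f^*|(x) & f(x) \\ f(x)^* & |f|(x) \end{matrix}\right]
\]
is positive a.e. in $M_2(\mathcal B(\mathcal H))$. Using $f = u|f|$, $f^* = |f|u^*$ and $|f^*| = u|f|u^*$ (which is the content of the polar decomposition $|f^*| = u|f|u^*$ valid pointwise), the key algebraic identity is
\[
M = \left[\begin{matrix} u|f|^{1/2} \\ |f|^{1/2} \end{matrix}\right]\left[\begin{matrix} |f|^{1/2}u^* & |f|^{1/2} \end{matrix}\right] = A A^*,
\]
where $A := \bigl[\, u|f|^{1/2} ;\ |f|^{1/2}\, \bigr]^{T}$ (a column of operators). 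Multiplying out verifies each entry, and any operator of the form $AA^*$ is positive. Thus $M(x) \geq 0$ a.e.

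With this block-positive decomposition in hand, $S_1 = |f^*|$ and $S_2 = |f|$ form an admissible pair in the infimum defining $\|f\|_{p,dec}$, yielding
\[
\|f\|_{p,dec} \leq \max\{\||f^*|\|_p,\ \||f|\|_p\},
\]
which is the claim. The only potential obstacle is confirming that $u$, $|f|$ and $|f^*|$ are bona fide quantum random variables so that $S_1, S_2 \in \mathcal L_\mathcal H(X,\nu)$; this follows from the Borel measurability of the maps $A \mapsto |A|$, $A \mapsto |A^*|$, and (on the complement of the kernel) $A \mapsto u_A$ on $\mathcal B(\mathcal H)$ in the appropriate topology, so the composition with $f$ preserves the quantum random variable property. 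Once measurability is settled, the rest is the identity $M = AA^*$ together with the definition of $\|\cdot\|_{p,dec}$.
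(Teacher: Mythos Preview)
Your proof is correct and is essentially the same as the paper's: both choose $S_1=|f^*|$, $S_2=|f|$ and verify the block matrix is positive via a factorization. The paper writes
\[
\begin{bmatrix} |f^*| & f \\ f^* & |f|\end{bmatrix} = \begin{bmatrix} u & 0 \\ 0 & I\end{bmatrix}\begin{bmatrix} |f| & |f| \\ |f| & |f|\end{bmatrix}\begin{bmatrix} u^* & 0 \\ 0 & I\end{bmatrix},
\]
which, upon factoring the middle matrix as $CC^*$ with $C=(\,|f|^{1/2};\,|f|^{1/2}\,)^T$, gives exactly your $AA^*$ with $A=\diag(u,I)\,C$. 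The paper does not discuss measurability of $|f|,|f^*|,u$ since the hypothesis $u\in\mathcal L$ already places $u$ in $\mathcal L_\mathcal H(X,\nu)$; your remarks on that point are harmless but not needed.
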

\begin{proof}
Observe
\[
\left[\begin{matrix} |f^*| & f \\ f^* & |f|\end{matrix}\right] = \left[\begin{matrix} u & 0 \\ 0 & I\end{matrix}\right]\left[\begin{matrix} |f| & |f| \\ |f| & |f|\end{matrix}\right]\left[\begin{matrix} u^* & 0 \\ 0 & I\end{matrix}\right] \geq 0.
\]
Hence, $\|f\|_{p,dec} \leq \max\{\||f|\|_{p}, \||f^*|\|_{p}\}$.
\end{proof}

\section{H\"{o}lder's inequality}

In the classical $L^p$ context one proves Young's inequality to prove H\"older's inequality to prove the triangle inequality (or Minkowski's inequality). Here we have already arrived at the triangle inequality in the previous sections. This is by design of course as H\"older's inequality does not hold in the generality one would wish. However, much can be said and this section explores the possibilities.

\begin{theorem}\label{HI2}
Suppose $\mathcal H = \mathbb C^n$ and $\frac{1}{q}+\frac{1}{p}=1$.
If $f,g \in \mathcal L_\mathcal H(X,\nu)$ such that $f^p, g^q\in L^1_\mathcal H(X,\nu)$ $f,g\geq 0$ and $fg=gf$, then  $fg\in L^1_\mathcal H(X,\nu)$ and 
\[
\|fg\|_1 
\leq
\|f^p\|_1^{1/p}\|g^q\|_1^{1/q}.
\]
\end{theorem}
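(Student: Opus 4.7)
The plan is to reduce the inequality, at each point $x$ and each state $s$, to the classical scalar H\"older inequality applied to the common eigendecomposition of $f(x)$ and $g(x)$, and then to lift the resulting pointwise bound to an integrated bound by the usual H\"older on $L^p(X,\nu_\rho)$. First I would observe that since $f,g\geq 0$ commute, their product is again positive a.e., so $fg$ is a positive QRV and the triple $(fg,0,0,0)$ lies in $Pos_{fg}$. Consequently
\[
\|fg\|_1 \;=\; \sup_{s\in\mathcal S(\mathcal H)} \int_X (fg)_s \, d\nu_\rho,
\]
and analogous identities hold for $\|f^p\|_1$ and $\|g^q\|_1$. Writing $D(x) = \bigl(\tfrac{d\nu}{d\nu_\rho}(x)\bigr)^{1/2}$ and $T_s(x) = D(x)\, s \,D(x) \geq 0$, we have $(h)_s(x) = \tr\bigl(T_s(x) h(x)\bigr)$ for any quantum random variable $h$.

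Next, fix $x$ and exploit that $f(x),g(x)$ are commuting positive operators on $\mathbb C^n$: choose an orthonormal basis $\{\psi_i(x)\}_{i=1}^n$ simultaneously diagonalizing them, with eigenvalues $\lambda_i(x)\geq 0$ for $f(x)$ and $\mu_i(x)\geq 0$ for $g(x)$. Writing $P_i(x) = \ketbra{\psi_i(x)}{\psi_i(x)}$ and $t_i(x) = \tr\bigl(T_s(x) P_i(x)\bigr) = \langle \psi_i(x), T_s(x)\psi_i(x)\rangle \geq 0$, commutativity gives $(fg)(x) = \sum_i \lambda_i(x)\mu_i(x) P_i(x)$, $f^p(x) = \sum_i \lambda_i(x)^p P_i(x)$, and $g^q(x) = \sum_i \mu_i(x)^q P_i(x)$. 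Therefore
\[
(fg)_s(x) = \sum_i \lambda_i(x)\mu_i(x) t_i(x), \qquad (f^p)_s(x) = \sum_i \lambda_i(x)^p t_i(x), \qquad (g^q)_s(x) = \sum_i \mu_i(x)^q t_i(x).
\]
Scalar H\"older applied to the nonnegative sequences $\bigl(\lambda_i(x) t_i(x)^{1/p}\bigr)_i$ and $\bigl(\mu_i(x) t_i(x)^{1/q}\bigr)_i$ then yields the pointwise estimate
\[
(fg)_s(x) \;\leq\; (f^p)_s(x)^{1/p}\, (g^q)_s(x)^{1/q}.
\]

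Finally, integrating this inequality in $x$ against $\nu_\rho$ and applying the classical H\"older inequality on $L^p(X,\nu_\rho)$ gives
\[
\int_X (fg)_s \, d\nu_\rho \;\leq\; \left(\int_X (f^p)_s \, d\nu_\rho\right)^{1/p} \left(\int_X (g^q)_s \, d\nu_\rho\right)^{1/q}.
\]
Since each factor on the right is bounded by $\|f^p\|_1$ respectively $\|g^q\|_1$ for every $s$, taking the supremum over $s\in\mathcal S(\mathcal H)$ on both sides gives $\|fg\|_1 \leq \|f^p\|_1^{1/p}\|g^q\|_1^{1/q}$, which in particular shows $fg\in L^1_{\mathcal H}(X,\nu)$. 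The only real subtlety I anticipate is the measurability bookkeeping for the pointwise diagonalization; but since $\mathcal H = \mathbb C^n$ is finite dimensional, one never needs the eigendata itself to be measurable --- only the scalar quantities $(fg)_s$, $(f^p)_s$, $(g^q)_s$ are used in the final integral, and these are measurable because $f,g$ are QRVs and products/powers of QRVs in finite dimensions are again QRVs.
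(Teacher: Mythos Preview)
Your proof is correct. The approach, however, differs from the paper's. The paper invokes Ando's matrix Young inequality for commuting positive matrices, obtaining the operator inequality
\[
\frac{f(x)g(x)}{AB} \leq \frac{f(x)^p}{pA^p} + \frac{g(x)^q}{qB^q},
\]
integrates both sides against $\nu$, takes the operator norm, and then chooses $A=\|f^p\|_1^{1/p}$, $B=\|g^q\|_1^{1/q}$. You instead fix a state $s$, simultaneously diagonalize $f(x)$ and $g(x)$, and apply scalar H\"older twice (once across eigenvalues, once across $X$) to bound $\int_X (fg)_s\,d\nu_\rho$ before taking the supremum in $s$. Your route is more elementary in that it bypasses the external reference to Ando and works entirely with classical H\"older; the paper's route is shorter and stays at the operator level until the last step. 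The two arguments are morally close---Ando's inequality itself is proved via simultaneous diagonalization and the scalar Young inequality---but your version makes the reduction to scalars explicit and uses H\"older rather than Young as the scalar ingredient.
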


\begin{proof}


Since $f$ and $g$ commute then $fg\geq 0$. 
By Young's inequality for commuting matrices developed by Ando \cite{MYI} we get:
\begin{align*}
    \frac{f(x) g(x) }{A  B}
    \leq
    \frac{f(x)^p}{A^p p}+\frac{g(x)^q}{B^q q}
\end{align*}
Then by the comparison theorem
\begin{align*}
    \bigg{\|}\int_X\frac{f(x) g(x)}{A B}d\nu\bigg{\|}
    &\leq
    \bigg{\|}\int_X\frac{f(x)^p}{A^p p}+\frac{g(x)^q}{B^q q}d\nu\bigg{\|}\\
    &\leq
    \bigg{\|}\int_X\frac{f(x)^p}{A^p p}d\nu\bigg{\|}+\bigg{\|}\int_X\frac{g(x)^q}{B^q q}d\nu\bigg{\|}
\end{align*}

Hence, letting $A=\|f^p\|_1^{1/p}$ and $B=\|g^q\|_1^{1/q}$ leads to
\begin{align*}
\left\|\int_X\frac{f(x)g(x)}{A B}d\nu\right\|
&\leq
\frac{1}{A^p p}\left\|\int_Xf(x)^pd\nu \right\|+\frac{1}{B^q q}\left\|\int_Xg(x)^qd\nu\right\|\\
&=
\frac{1}{p}+\frac{1}{q}\\
&=
1
\end{align*}
Therefore,
\begin{align*}
\|fg\|_1 & = \left\|\int_Xf g d\nu\right\|
\\ & \leq
AB\\
& = \|f^p\|_1^{1/p}\|g^q\|_1^{1/q}
\end{align*}
\end{proof}

\begin{corollary}
Suppose $\mathcal H = \mathbb C^n$ and $\frac{1}{q}+\frac{1}{p}=1$.
If $f,g \in \mathcal L_\mathcal H(X,\nu)$ such that $\|f(x)\|^pI_n, g^q\in L^1_\mathcal H(X,\nu)$ and $f,g\geq 0$  then $g^\frac{1}{2}fg^\frac{1}{2} \in L^1_\mathcal H(X,\nu)$ and 
\begin{align*}
\|g^\frac{1}{2} f g^\frac{1}{2}\|_1
\leq
\big\|\|f(x)\|^pI_n\big\|_1^{1/p} \|g^q\|_1^{1/q}
\end{align*}
\end{corollary}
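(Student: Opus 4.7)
The plan is to reduce to the commuting case already established in Theorem \ref{HI2} via a pointwise operator inequality. For each $x \in X$ the positivity of $f(x) \in M_n(\mathbb C)$ gives the standard bound $f(x) \le \|f(x)\| I_n$. Conjugation by the positive operator $g(x)^{1/2}$ preserves the Loewner order, so
\[
0 \le g(x)^{1/2} f(x) g(x)^{1/2} \le g(x)^{1/2} \bigl(\|f(x)\| I_n\bigr) g(x)^{1/2} = \|f(x)\|\, g(x).
\]
Since $\mathcal H = \mathbb C^n$ is finite-dimensional, the scalar function $x \mapsto \|f(x)\|$ is measurable as a continuous function of the matrix entries of $f$, so $\|f(\cdot)\|\, I_n$ is a positive quantum random variable. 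Crucially, it commutes with $g(x)$ at every $x$ since it is a scalar multiple of the identity.

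The next step is to invoke monotonicity of $\|\cdot\|_1$ on positive quantum random variables. For positive $h \in \mathcal L^1_\mathcal H(X,\nu)$ the decomposition $(h,0,0,0)$ is optimal and gives $\|h\|_1 = \bigl\|\int_X h\, d\nu\bigr\|$, as recorded right after the definition of $\|\cdot\|_p$. If $0 \le h_1 \le h_2$ pointwise then $\int_X h_1\, d\nu \le \int_X h_2\, d\nu$ in the Loewner order on $\mathcal B(\mathcal H)$, and the operator norm is monotone on positives, so $\|h_1\|_1 \le \|h_2\|_1$. Applying this to the pointwise inequality above yields
\[
\|g^{1/2} f g^{1/2}\|_1 \le \bigl\|\,\|f(\cdot)\|\, g\,\bigr\|_1.
\]

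Finally, the pair $\bigl(\|f(\cdot)\| I_n,\, g\bigr)$ consists of commuting positive quantum random variables whose $p$-th and $q$-th powers lie in $L^1_\mathcal H(X,\nu)$ by hypothesis, so Theorem \ref{HI2} applies and gives
\[
\bigl\|\,\|f(\cdot)\| I_n \cdot g\,\bigr\|_1 \le \bigl\|\,\|f(\cdot)\|^p I_n\,\bigr\|_1^{1/p} \|g^q\|_1^{1/q}.
\]
Chaining the two estimates delivers both the integrability of $g^{1/2} f g^{1/2}$ and the claimed bound. No serious obstacle is anticipated; the argument is just a sandwiching trick, and the only point that needs to be checked with care is the pointwise monotonicity of $\|\cdot\|_1$ on positive functions, which is essentially bookkeeping from the definition.
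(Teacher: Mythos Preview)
Your argument is correct and is essentially the same as the paper's: both use $f(x)\le \|f(x)\|I_n$, conjugate by $g^{1/2}$, apply monotonicity of $\|\cdot\|_1$ on positives, and then invoke Theorem~\ref{HI2} on the commuting pair $(\|f(\cdot)\|I_n,\,g)$. You simply spell out the measurability and monotonicity steps that the paper leaves implicit.
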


\begin{proof}
By positivity
\[
f(x) \leq \|f(x)\| I_n
\]
and so
\[
g(x)^\frac{1}{2} f(x) g(x)^\frac{1}{2}
\leq
\|f(x)\|g(x)
\]
Now since $g(x)$ and $\|f(x)\| I_n$ commute, we can apply the previous theorem to get
\[
\|g^\frac{1}{2} f g^\frac{1}{2}\|_1
\leq
\big\| \|f(x)\|g(x)\big\|_1
\leq
\big\|\|f(x)\|^p I_n\big\|_1^{1/p} \|g^q\|_1^{1/q}.
\]
\end{proof}

We can use the previous theorem to establish a limited triangle inequality.

\begin{theorem}
Suppose $\mathcal H=\mathbb C^n$. If $f,g\in \mathcal L_\mathcal H(X,\nu)$ such that $f^p,g^p\in L^1_\mathcal H(X,\nu)$, $f,g\geq 0$ and $fg = gf$ then $(f+g)^p\in L^1_\mathcal H(X,\nu)$ and 
\[
\|(f+g)^p\|_1^{1/p} \leq \|f^p\|_1^{1/p} + \|g^p\|_1^{1/p}.
\]
\end{theorem}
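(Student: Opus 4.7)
The plan is to mirror the scalar Minkowski-inequality argument, reducing the statement to two applications of the commutative H\"older bound in Theorem \ref{HI2}. The case $p=1$ is just the triangle inequality for $\|\cdot\|_1$ already established on $\mathcal L_\mathcal H(X,\nu)$, so assume $p>1$ and let $q=p/(p-1)$, so that $(p-1)q=p$. The starting point is the pointwise identity
\[
(f+g)^p = f(f+g)^{p-1} + g(f+g)^{p-1},
\]
which is valid a.e.\ because $f(x)$ and $g(x)$ are commuting positive matrices in $M_n(\mathbb C)$ and hence simultaneously diagonalizable. I will bound each summand via H\"older and then divide through by the appropriate power of $\|(f+g)^p\|_1$.

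First I need to check that $(f+g)^p\in L^1_\mathcal H(X,\nu)$ and that the relevant products satisfy the hypotheses of Theorem \ref{HI2}. At each point the scalar inequality $(a+b)^p\le 2^{p-1}(a^p+b^p)$ transfers, through joint diagonalization, to $0\le (f+g)^p\le 2^{p-1}(f^p+g^p)$, so the triangle inequality and the assumed integrability of $f^p$ and $g^p$ give $(f+g)^p\in L^1_\mathcal H(X,\nu)$. The factor $(f+g)^{p-1}$ is a positive continuous function of $f+g$ and therefore commutes with both $f$ and $g$, so each of $f(f+g)^{p-1}$ and $g(f+g)^{p-1}$ is a commuting product of positive quantum random variables, placing it within the reach of Theorem \ref{HI2}.

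Applying Theorem \ref{HI2} with exponents $p,q$ and using $((f+g)^{p-1})^q=(f+g)^p$, I obtain
\[
\|f(f+g)^{p-1}\|_1 \le \|f^p\|_1^{1/p}\|(f+g)^p\|_1^{1/q},
\]
and the analogous bound for $g(f+g)^{p-1}$. The triangle inequality for $\|\cdot\|_1$ applied to the decomposition above then yields
\[
\|(f+g)^p\|_1 \le \bigl(\|f^p\|_1^{1/p}+\|g^p\|_1^{1/p}\bigr)\|(f+g)^p\|_1^{1/q},
\]
and dividing by $\|(f+g)^p\|_1^{1/q}$ (the conclusion being trivial if this factor vanishes) together with $1-1/q=1/p$ produces the claimed inequality. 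The only real obstacle I anticipate is the bookkeeping needed to certify that the commuting-positive structure is preserved through each operation so that Theorem \ref{HI2} may legitimately be invoked; once those checks are in place the algebra is a direct translation of the classical scalar argument.
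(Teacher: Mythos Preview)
Your argument is correct and follows exactly the same route as the paper: the classical Minkowski proof via the splitting $(f+g)^p=f(f+g)^{p-1}+g(f+g)^{p-1}$, two applications of Theorem~\ref{HI2} with $((f+g)^{p-1})^q=(f+g)^p$, and division by $\|(f+g)^p\|_1^{1/q}$. You are simply more explicit than the paper about the bookkeeping (the $p=1$ case, the bound $(f+g)^p\le 2^{p-1}(f^p+g^p)$ securing integrability, and the commutativity of $(f+g)^{p-1}$ with $f,g$), but there is no substantive difference.
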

\begin{proof}
The classic proof of the triangle (or Minkowski) inequality still works in this case. Namely, because $f$ and $g$ are commuting positive operators then by the previous theorem and assuming $\frac{1}{p} + \frac{1}{q} = 1$ 
\begin{align*}
\|(f+g)^p)\|_1 & = \|(f+g)(f+g)^{p-1}\|_1
\\ & \leq \|f(f+g)^{p-1}\|_1 + \|g(f+g)^{p-1}\|_1
\\ & \leq \|f^p\|_1^{1/p}\|(f+g)^{q(p-1)}\|_1^{1/q} + \|g^p\|_1^{1/p}\|(f+g)^{q(p-1)}\|_1^{1/q}
\\ & = (\|f^p\|_1^{1/p} + \|g^p\|_1^{1/p})\|(f+g)^p\|_1^{(p-1)/p}.
\end{align*}
The conclusion then follows.
\end{proof}

While one can prove the following result using the variation of H\"older's inequality for not necessarily commuting functions, it is simpler to get it as a direct result of the previous theorem.

\begin{corollary}
Suppose $\mathcal H=\mathbb C^n$ and $p\geq 1$. If $f,g\in \mathcal L_\mathcal H(X,\nu)$ such that $\|f(x)\|^pI_n,\|g(x)\|^pI_n \in  L^1_\mathcal H(X,\nu)$ and $f,g\geq 0$ then $(f+g)^p \in L^1_\mathcal H(X,\nu)$ and 
\[
\|(f+g)^p\|_1^{1/p} \leq \big\|\|f(x)\|^pI_n\big\|_1^{1/p} + \big\|\|g(x)\|^pI_n\big\|_1^{1/p}
\]
\end{corollary}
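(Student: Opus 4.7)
The plan is to reduce the non-commuting situation to the previous theorem by dominating $f$ and $g$ pointwise by scalar multiples of $I_n$, which trivially commute. The first step would be the observation that for a positive matrix $A\in M_n(\mathbb C)$, the spectral theorem gives $A \leq \|A\|I_n$. Applied pointwise to $f$ and $g$ and summed, this yields
\[
0 \leq f(x) + g(x) \leq \bigl(\|f(x)\| + \|g(x)\|\bigr)I_n \qquad \text{a.e.}
\]

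The key step is then to raise both sides to the $p$-th power without losing the inequality. This is where having a \emph{scalar} upper bound pays off: although $t\mapsto t^p$ is not operator monotone for $p>1$ in general, whenever $A\geq 0$ satisfies $A\leq cI_n$ one has $\sigma(A)\subseteq [0,c]$ and hence $\sigma(A^p)\subseteq [0,c^p]$, giving $A^p\leq c^p I_n$. Applying this pointwise,
\[
(f(x)+g(x))^p \leq \bigl(\|f(x)\| + \|g(x)\|\bigr)^p I_n \qquad \text{a.e.}
\]

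Next I would take the $\|\cdot\|_1$-norm of both sides. Since integration against the POVM $\nu$ preserves the Loewner order on positive quantum random variables, and the operator norm is monotone on positive operators, this yields
\[
\|(f+g)^p\|_1 \;\leq\; \bigl\|(\|f(\cdot)\|+\|g(\cdot)\|)^p I_n\bigr\|_1,
\]
which in particular places $(f+g)^p$ in $L^1_{\mathcal H}(X,\nu)$ by comparison with the right-hand side.

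Finally, $\|f(x)\|I_n$ and $\|g(x)\|I_n$ are commuting positive quantum random variables whose $p$-th powers lie in $L^1_{\mathcal H}(X,\nu)$ by hypothesis, so the previous theorem applies and gives
\[
\bigl\|(\|f(\cdot)\|I_n+\|g(\cdot)\|I_n)^p\bigr\|_1^{1/p} \;\leq\; \bigl\|\|f(\cdot)\|^pI_n\bigr\|_1^{1/p} + \bigl\|\|g(\cdot)\|^pI_n\bigr\|_1^{1/p}.
\]
Chaining this with the previous display (after taking $1/p$-th powers) produces the desired inequality. I do not foresee a real obstacle — the only delicate point is the order-preserving passage $A^p \leq c^p I_n$, which is a one-line spectral argument rather than a genuine difficulty, and everything else is routine monotonicity of the pieces comprising $\|\cdot\|_1$.
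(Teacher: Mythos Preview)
Your proof is correct and follows the same strategy as the paper: bound $(f+g)^p$ pointwise by $(\|f\|+\|g\|)^p I_n$ via the spectrum, then invoke the previous commuting-case theorem on the scalar functions $\|f(\cdot)\|I_n$ and $\|g(\cdot)\|I_n$. Your route is in fact cleaner than the paper's, which inserts the step $(\|f(x)\|+\|g(x)\|)^p I_n \leq \|f(x)\|^p I_n + \|g(x)\|^p I_n$ (false for $p>1$) before appealing to the previous theorem, whereas you correctly pass directly from $\|(\|f\|I_n+\|g\|I_n)^p\|_1^{1/p}$ to the right-hand side.
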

\begin{proof}
Since $f+g \geq 0$ then by functional calculus
\begin{align*}
(f(x)+g(x))^p & \leq \|(f(x)+g(x))^p\|I_n
\\ & = \|f(x) + g(x)\|^pI_n
\\ & \leq (\|f(x)\|+\|g(x)\|)^pI_n
\\ & = (\|f(x)\|I_n + \|g(x)\|I_n)^{p}.
\end{align*}
Thus, by comparison and the previous theorem, because multiples of the identity commute,
\begin{align*}
\|(f+g)^p\|_1^{1/p} & \leq \big\|(\|f(x)\|I_n + \|g(x)\|I_n)^p \big\|_1^{1/p}
\\ & \leq  \big\|\|f(x)\|^pI_n\big\|_1^{1/p} + \big\|\|g(x)\|^pI_n\big\|_1^{1/p}
\end{align*}
\end{proof}

These do not generalize well into infinite dimensions since Ando's matrix Young's inequality only really makes sense for compact operators \cite{EFZ}. As well, the above two results do not extend from positive to arbitrary functions. The crux of the problem is that multiplication does not work well with positive operators and the Loewner order (for $P,Q$ self-adjoint, $P\leq Q$ if and only if $Q-P$ is a positive operator).

A possibility to fix this is to consider tensor products, forcing commutativity. Suppose $\mu$ is a finite, positive measure on $X$ and $\nu_1$ and $\nu_2$ are POVMs into $\mathcal B(\mathcal H_1)$ and $\mathcal B(\mathcal H_2)$, respectively, such that $\nu_1,\nu_2 \ll_{ac} \mu$. One can define a POVM 
\[
\nu_1 \otimes_\mu \nu_2(E) = \int_E \left( \frac{d\nu_1}{d\mu} \otimes \frac{d\nu_2}{d\mu} \right) d\mu I_{\mathcal H_1 \otimes \mathcal H_2}
\]
provided that $\nu_1 \otimes_\mu \nu_2(X) \in \mathcal B(\mathcal H_1\otimes\mathcal H_2)$.

The choice of $\mu$ makes a difference: if $\gamma$ is a finite, positive measure such that $\mu <<_{ac} \gamma$ then 
\begin{align*} 
\nu_1 \otimes_\gamma \nu_2(E) & = \int_E \left( \frac{d\nu_1}{d\gamma} \otimes \frac{d\nu_2}{d\gamma} \right) d\gamma I_{\mathcal H_1 \otimes \mathcal H_2}
\\ & = \int_E \left( \frac{d\nu_1}{d\mu}\frac{d\mu}{d\gamma}I_{\mathcal H_1} \otimes \frac{d\nu_2}{d\mu}\frac{d\mu}{d\gamma}I_{\mathcal H_2} \right)d\gamma I_{\mathcal H_1\otimes \mathcal H_2}
\\ & = \int_E \frac{d\mu}{d\gamma}\left( \frac{d\nu_1}{d\mu} \otimes \frac{d\nu_2}{d\mu} \right) d\gamma I_{\mathcal H_1 \otimes \mathcal H_2}
\end{align*}
which in general is not equal to $\nu_1 \otimes_\mu \nu_2(E)$.

An alternative definition would be to take the square root of each Radon-Nikod\'ym derivative. While this would give a tensor product of measures that is the same no matter the choice of $\mu$, the downside is that one loses the structure discussed below.

Recall that if $s_1\in \mathcal S(\mathcal H_1)$ and $s_2\in \mathcal S(\mathcal H_2)$ then $s_1\otimes s_2 \in \mathcal S(\mathcal H_1\otimes \mathcal H_2)$, called a {\em product state}. In this way one can see that 
\[
\mathcal S(\mathcal H_1\otimes \mathcal H_2)_{sep} := \overline{\operatorname{conv}\{s_1\otimes s_2 : s_1\in \mathcal S(\mathcal H_1), s_2\in\mathcal S(\mathcal H_2)\}} \subset \mathcal S(\mathcal H_1\otimes \mathcal H_2).
\]
These are called the {\em separable states} and it is a well known fact that not all states are separable, calling such a non-separable state an {\em entangled state}.

Define a new seminorm using only the separable states
\[
\|f\otimes g\|_{p,sep,\nu_1 \otimes_{\mu} \nu_2} := \sup\{ \| (f\otimes g)_s\|_{p,\mu} : s \in\mathcal S(\mathcal H_1 \otimes \mathcal H_2)_{sep} \}
\]
and in the same way define a decomposable version 
\begin{align*}
& \|f\otimes g\|_{p,sep,dec} :=  \inf\bigg\{ \max\{ \|S_1\|_{p,sep, \nu_1\otimes_\mu \nu_2}, \|S_2\|_{p,sep, \nu_1\otimes_\mu \nu_2} \} :
\\  & \quad \quad \quad   S_1,S_2\in L^p_{\mathcal H_1\otimes\mathcal H_2}(X,\nu_1\otimes_\mu \nu_2), S_1,S_2\geq 0, \left[\begin{matrix} S_1(x) & f(x) \\ f(x)^* & S_2(x) \end{matrix}\right] \geq 0 \ \textrm{a.e.} \bigg\}.
\end{align*}
Using these we can establish a version of H\"older's inequality.

\begin{theorem}
If $f\in L^p_{\mathcal H}(X,\nu)$ and $g\in L^q_{\mathcal H}(X,\nu)$ for $\frac{1}{p} + \frac{1}{q} =1$ then
\[
\|f\otimes g\|_{1,sep,\nu \otimes_{\nu_\rho} \nu} \leq \|f\|_{p}\|g\|_{q} 
\]
and
\[
\|f\otimes g\|_{1,sep, dec} \leq \|f\|_{p,dec}\|g\|_{q,dec}.
\]
\end{theorem}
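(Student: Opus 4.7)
The plan is to reduce everything to classical Hölder on $L^p(X,\nu_\rho)$ via a factorization of $(f\otimes g)_s$ on product states. First I would pin down the Radon-Nikodým derivative of the product POVM: since $\tr(\rho\,\tfrac{d\nu}{d\nu_\rho}(x))=1$ almost everywhere, a short calculation gives $(\nu\otimes_{\nu_\rho}\nu)_{\rho\otimes\rho}=\nu_\rho$, so $\frac{d(\nu\otimes_{\nu_\rho}\nu)}{d\nu_\rho}=\frac{d\nu}{d\nu_\rho}\otimes\frac{d\nu}{d\nu_\rho}$. Combining this with $(A\otimes B)^{1/2}=A^{1/2}\otimes B^{1/2}$ for positive $A,B$ and the multiplicativity of trace on tensor products yields the key pointwise identity
\[
(f\otimes g)_{s_1\otimes s_2}(x)=f_{s_1}(x)\,g_{s_2}(x).
\]
Classical Hölder in $L^p(X,\nu_\rho)$ then gives $\|(f\otimes g)_{s_1\otimes s_2}\|_1\leq\|f_{s_1}\|_p\|g_{s_2}\|_q$, and the pointwise estimate $|f_{s_1}(x)|\leq(f_1+f_2+f_3+f_4)_{s_1}(x)$ for any $(f_1,f_2,f_3,f_4)\in Pos_f$ yields $\|f_{s_1}\|_p\leq\|f\|_p$ after taking the infimum over $Pos_f$, and similarly for $g$.

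Next I would extend to general separable states. For a finite convex combination $s=\sum_i\lambda_i s_{1,i}\otimes s_{2,i}$, linearity of the trace gives $(f\otimes g)_s=\sum_i\lambda_i f_{s_{1,i}}g_{s_{2,i}}$, and the triangle inequality in $L^1(X,\nu_\rho)$ combined with the product-state bound gives $\|(f\otimes g)_s\|_1\leq\|f\|_p\|g\|_q$. For $s$ in the closure of the convex hull, I would approximate by such combinations $s_n\to s$ and pass to the limit using pointwise convergence of $(f\otimes g)_{s_n}$ together with the uniform domination $|(f\otimes g)_{s_n}(x)|\leq\|D(x)^{1/2}(f\otimes g)(x)D(x)^{1/2}\|$ (where $D$ is the Radon-Nikodým derivative), invoking dominated convergence.

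For the decomposable inequality, I would take any positive witnesses $S_1,S_2$ and $T_1,T_2$ from the defining infima for $\|f\|_{p,dec}$ and $\|g\|_{q,dec}$. Their operator tensor product is a positive block operator on $(\mathbb C^2\otimes\mathbb C^2)\otimes\mathcal H\otimes\mathcal H$, and compression by the isometry $V:\mathbb C^2\to\mathbb C^2\otimes\mathbb C^2$, $Ve_i=e_i\otimes e_i$, produces the $2\times 2$ positive block operator
\[
\left[\begin{matrix}S_1\otimes T_1 & f\otimes g\\ (f\otimes g)^* & S_2\otimes T_2\end{matrix}\right]\geq 0,
\]
which is a legitimate witness for $\|f\otimes g\|_{1,sep,dec}$. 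Since each $S_i\otimes T_i$ is positive (so $\|\cdot\|_{1,sep}$ coincides with $\sup_s\|(\cdot)_s\|_1$ there), the first part of the theorem gives $\|S_i\otimes T_i\|_{1,sep}\leq\|S_i\|_p\|T_i\|_q\leq\max\{\|S_1\|_p,\|S_2\|_p\}\cdot\max\{\|T_1\|_q,\|T_2\|_q\}$. Taking infima over the two sets of witnesses concludes.

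The main obstacle I anticipate is the closure argument: the topology used in defining the separable states is not specified, and while the dominated convergence approach above works cleanly when $\|D^{1/2}(f\otimes g)D^{1/2}\|\in L^1(X,\nu_\rho)$ (automatic in finite dimensions), the general infinite-dimensional case may require additional regularity or a weak-$\ast$ lower-semicontinuity argument for $s\mapsto\|(f\otimes g)_s\|_1$. A minor bookkeeping step is verifying that compressing the operator tensor product of the two $2\times 2$ block witnesses really gives the desired corner block; this is standard, being a principal submatrix of a positive operator.
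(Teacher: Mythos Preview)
Your proposal is correct and follows essentially the same route as the paper: the pointwise factorization $(f\otimes g)_{s_1\otimes s_2}=f_{s_1}g_{s_2}$, classical H\"older, extension to convex combinations and then to the closure, and for the decomposable part the tensor of the two $2\times 2$ witnesses. The only organizational difference is that the paper first proves the product-state bound for positive $f,g$ and then reduces general $f,g$ via the inequality $\|f\otimes g\|_{1,sep}\leq \|(\sum_i f_i)\otimes(\sum_j g_j)\|_{1,sep}$, whereas you bound $|f_{s_1}|\leq(\sum_i f_i)_{s_1}$ pointwise up front; both orderings are equivalent. Your explicit identification of $\frac{d(\nu\otimes_{\nu_\rho}\nu)}{d\nu_\rho}$ and your compression argument for the block positivity make explicit steps the paper leaves implicit, and your caveat about the closure step in infinite dimensions is well taken---the paper simply asserts that $s_k\to s$ implies $f_{s_k}\to f_s$ in $L^1(X,\nu_\rho)$ without further justification.
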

\begin{proof}
First, if $f$ and $g$ are positive and $s_1, s_2\in \mathcal S(\mathcal H)$ then
\begin{align*}
\|(f\otimes g)_{s_1 \otimes s_2}\|_1 
& = \int_X \tr \left(\left(s_1\frac{d\nu}{d\nu_\rho}^{1/2}f\frac{d\nu}{d\nu_\rho}^{1/2}\right)\otimes\left(s_2\frac{d\nu}{d\nu_\rho}^{1/2}g\frac{d\nu}{d\nu_\rho}^{1/2}\right) \right) d\nu_\rho
\\ & = \int_X \tr \left(s_1\frac{d\nu}{d\nu_\rho}^{1/2}f\frac{d\nu}{d\nu_\rho}^{1/2}\right)\tr \left(s_2\frac{d\nu}{d\nu_\rho}^{1/2}g\frac{d\nu}{d\nu_\rho}^{1/2}\right) d\nu_\rho
\\ & = \|f_{s_1}g_{s_2}\|_1
\\ & \leq \|f_{s_1}\|_p\|g_{s_2}\|_q
\\ & \leq \|f\|_{p}\|g\|_{q},
\end{align*}
using the classic H\"older's inequality.
Now suppose that $s_{i,j} \in \mathcal S(\mathcal H), i=1,2, 1\leq j\leq n$ and $\lambda_j\geq 0$ such that $\sum_{j=1}^n \lambda_j = 1$. If $s = \sum_{j=1}^n \lambda_j(s_{1,j}\otimes s_{2,j})$ then $s\in \mathcal S(\mathcal H\otimes\mathcal H)_{sep}$ and 
\begin{align*}
\|(f\otimes g)_s\|_1 
& = \int_X (f\otimes g)_s d\nu_\rho
\\ & = \sum_{j=1}^n \lambda_j \|(f\otimes g)_{s_{1,j}\otimes s_{2,j}}\|_1
\\ & \leq \sum_{j=1}^n \lambda_j \|f\|_{p}\|g\|_{q}
\\ & = \|f\|_{p}\|g\|_{q}.
\end{align*}
It is straightforward that if $s_k \rightarrow s$ in $\mathcal S(\mathcal H)$ then $f_{s_k} \rightarrow f_s$ in $L^1(X,\nu_\rho)$. Hence, for all $s\in \mathcal S(\mathcal H\otimes \mathcal H)_{sep}$
\[
\|(f\otimes g)_s\|_1 \leq \|f\|_{p}\|g\|_{q}.
\]

Secondly, suppose $(f_1,f_2,f_3,f_4)\in Pos_f$ and $(g_1,g_2,g_3,g_4)\in Pos_g$. As elsewhere, we have
\begin{align*}
\|f\otimes g\|_{1,sep,\nu\otimes_{\nu_\rho} \nu} &\leq \|(f_1 + f_2 +f_3 + f_4)\otimes(g_1+g_2+g_3+g_4)\|_{1,sep,\nu\otimes_{\nu_\rho} \nu}
\\ & = \sup_{s\in\mathcal S(\mathcal H\otimes \mathcal H)_{sep}} \|((f_1 + f_2 +f_3 + f_4)\otimes(g_1+g_2+g_3+g_4))_s\|_1
\\ & \leq \|f_1+f_2+f_3+f_4\|_{p}\|g_1+g_2+g_3+g_4\|_{q}.
\end{align*}
Taking the infimum over all possible combinations gives the first desired result.

Lastly, for general $f$ and $g$ suppose that $S_1,S_2\geq 0$ in $L^p_\mathcal H(X,\nu)$, $T_1,T_2\geq 0$ in $L^q_\mathcal H(X,\nu)$ such that
\[
\left[ \begin{matrix} S_1 & f \\ f^* & S_2 \end{matrix}\right] \geq 0 \quad \textrm{and} \quad \left[ \begin{matrix} T_1 & g \\ g^* & T_2 \end{matrix}\right] \geq 0.
\]
This implies that 
\[
\left[ \begin{matrix} S_1\otimes T_1 & f\otimes g \\ (f\otimes g)^* & S_2\otimes T_2 \end{matrix}\right] \geq 0.
\]
Thus, 
\begin{align*}
\|f\otimes g\|_{1,sep,dec} & \leq \max\{\|S_1\otimes T_1\|_{1, sep, \nu \otimes_{\nu_\rho} \nu}, \|S_2\otimes T_2\|_{1, sep, \nu \otimes_{\nu_\rho} \nu}\}
\\ & \leq \max\{ \|S_1\|_{p}\|T_1\|_{q},\ \|S_2\|_{p}\|T_2\|_{q}\}
\\ & \leq \max\{\|S_1\|_{p}, \|S_2\|_{p}\}\max\{\|T_1\|_{q}, \|T_2\|_{q}\}
\end{align*}
and taking infimums over all such $S_1,S_2,T_1,T_2$ leads to the second desired inequality.
\end{proof}

In \cite{PR2} it was shown that $L^\infty_\mathcal H(X,\nu)$ functions are not bounded multipliers of $L^1_\mathcal H(X,\nu)$ in general. Using the tensor product gets around this under invertibility and boundedness conditions on the Radon-Nikod\'ym derivative.

\begin{theorem}
Suppose $\frac{d\nu}{d\nu_\rho}\in L^\infty_\mathcal H(X,\nu)$ and $\frac{d\nu}{d\nu_\rho}(x)$ is invertible a.e.
If $f\in L^1_\mathcal H(X,\nu)$ and $g\in L^\infty_\mathcal H(X,\nu)$ then $f\otimes g\in L^1_\mathcal H(X,\nu\otimes_{\nu_\rho} \nu)$ such that
\[
\|f\otimes g\|_{1} \leq 2\left\|\frac{d\nu}{d\nu_\rho}\right\|_\infty|\|f\|_1\|g\|_\infty
\]
and
\[
\|f\otimes g\|_{1,dec} \leq \left\|\frac{d\nu}{d\nu_\rho}\right\|_\infty|\|f\|_{1,dec}\|g\|_\infty.
\]
\end{theorem}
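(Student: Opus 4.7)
The engine behind both inequalities is the following simple observation: for positive operators $A, B \in \mathcal{B}(\mathcal{H})$, $A \otimes B \leq \|B\|(A \otimes I)$, and for any density $s \in \mathcal{S}(\mathcal{H} \otimes \mathcal{H})$ the partial-trace identity $\tr(s(A \otimes I)) = \tr(s_1 A)$ holds, where $s_1 := (\operatorname{id} \otimes \tr)(s) \in \mathcal{S}(\mathcal{H})$. Applied with $A = \frac{d\nu}{d\nu_\rho}^{1/2} F \frac{d\nu}{d\nu_\rho}^{1/2}$ and $B = \frac{d\nu}{d\nu_\rho}^{1/2} G \frac{d\nu}{d\nu_\rho}^{1/2}$ for positive QRVs $F, G$, and integrating against $\nu_\rho$, we obtain the master estimate
\[
\|(F \otimes G)_s\|_1 \;\leq\; \left\|\frac{d\nu}{d\nu_\rho}\right\|_\infty \|G\|_\infty \, \|F\|_1
\]
uniformly in $s \in \mathcal{S}(\mathcal{H} \otimes \mathcal{H})$, since $\|B(x)\| \leq \|\frac{d\nu}{d\nu_\rho}\|_\infty \|G\|_\infty$ a.e.\ and $\int F_{s_1}\,d\nu_\rho = \tr\bigl(s_1 \int F\,d\nu\bigr) \leq \|F\|_1$.

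For the decomposable bound, take any positive $S_1, S_2 \in L^1_\mathcal{H}(X,\nu)$ dilating $f$ and use the trivial dilation $T_1 = T_2 = \|g\|_\infty I$ of $g$; positivity of this $2 \times 2$ block follows from $\|g(x)\| \leq \|g\|_\infty$ via the Schur complement. As in the previous H\"older theorem, the corresponding tensor block dilates $f \otimes g$, so $\|f \otimes g\|_{1,dec} \leq \max_i \|S_i \otimes T_i\|_1$. Applying the master estimate with $F = S_i$ and $G = T_i$ gives $\|S_i \otimes T_i\|_1 \leq \|\frac{d\nu}{d\nu_\rho}\|_\infty \|g\|_\infty \|S_i\|_1$; taking the maximum and then the infimum over dilations of $f$ yields the second inequality.

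For the first inequality, fix any $(f_1,\dots,f_4) \in Pos_f$ and use the specific decomposition $g_1 = (Re\,g)_+$, $g_2 = (Re\,g)_-$, $g_3 = (Im\,g)_+$, $g_4 = (Im\,g)_-$, which are legitimate positive $L^\infty$ QRVs obtained by pointwise Borel functional calculus. Expanding $f \otimes g$ and regrouping the $16$ resulting tensor terms by their signs $\pm 1, \pm i$ produces a $4$-tuple in $Pos_{f \otimes g}$ whose sum equals $F \otimes G$ with $F = \sum_i f_i$ and $G = \sum_j g_j = |Re\,g| + |Im\,g|$, hence $\|G\|_\infty \leq 2\|g\|_\infty$. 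The definition of $\|\cdot\|_1$ together with the master estimate then bounds $\|f \otimes g\|_1 \leq 2\|\frac{d\nu}{d\nu_\rho}\|_\infty \|g\|_\infty \|F\|_1$, and taking the infimum over $Pos_f$ (using $\inf\|F\|_1 = \|f\|_1$ since each $F$ is positive) completes the argument. The only delicate point is the partial-trace identity in infinite dimensions, which is standard but must be invoked with care; the factor $2$ in the first inequality is intrinsic to splitting both the real and imaginary parts of $g$.
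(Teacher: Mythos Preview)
Your argument is correct and shares the paper's overall architecture---prove a master estimate for positive $F\otimes G$, then feed decompositions of $f$ and $g$ through it---but the tactical choices differ in three places. First, for the master estimate you invoke the partial-trace identity $\tr(s(A\otimes I))=\tr(s_1A)$ directly, whereas the paper cites \cite[Lemma~3.8]{PR2} to rewrite $\int F\otimes G\,d(\nu\otimes_{\nu_\rho}\nu)$; your route is self-contained and makes explicit that the invertibility hypothesis on $\frac{d\nu}{d\nu_\rho}$ is used only to identify $\nu_\rho$ as a legitimate reference measure for the tensor POVM. Second, for the first inequality the paper treats self-adjoint $g$ first (obtaining constant $1$) and then doubles via $g=Re\,g+i\,Im\,g$ and the triangle inequality, while you decompose $g$ into its four positive pieces in one step; both paths land on the same factor $2$ through $\|\,|Re\,g|+|Im\,g|\,\|_\infty\le 2\|g\|_\infty$. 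Third, for the decomposable bound the paper dilates $g$ by $|g^*|,|g|$ using Lemma~\ref{lemma:absolutevalue}, whereas you use the constant dilation $T_1=T_2=\|g\|_\infty I$; the latter is simpler to verify (Schur complement) and feeds into the master estimate just as cleanly. None of these differences affects the strength of the conclusion---they are interchangeable implementations of the same idea.
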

\begin{proof}
This uses calculations very similar to \cite[Proposition 3.13]{PR2}.
Let $(f_1,f_2, f_3,f_4)\in Pos_f$. If $g$ is self-adjoint then its positive and negative parts are also essentially bounded. Moreover,
\begin{align*}
\|f\otimes g\|_1 &  = \left\| (f_1-f_2 + i(f_3 - f_4))\otimes(g_+ - g_-) \right\|_1
\\ & = \bigg\| f_1\otimes g_+ + f_2\otimes g_- - f_1 \otimes g_- - f_2\otimes g_+ 
\\ & \quad \quad \quad + i(f_3\otimes g_+ + f_4\otimes g_- - f_3\otimes g_- - f_4\otimes g_+) \bigg\|_1
\\ & \leq \bigg\| \int_X f_1\otimes g_+ + f_2\otimes g_- + f_1 \otimes g_- + f_2\otimes g_+ 
\\ & \quad \quad \quad  + f_3\otimes g_+ + f_4\otimes g_- + f_3\otimes g_- + f_4\otimes g_+ \: d\nu\otimes_{\nu_\rho} \nu \bigg\|
\\ & = \left\| \int_X (f_1+f_2+f_3+f_4)\otimes(g_+ + g_-)\: d\nu\otimes_{\nu_\rho} \nu \right\|
\\ & = \left\| \int_X (f_1+f_2+f_3+f_4)\otimes |g|\: d\nu\otimes_{\nu_\rho} \nu \right\|.
\end{align*}
By \cite[Lemma 3.8]{PR2} this last equation is equal to
\begin{align*}
&\left\| \int_X \frac{d\nu}{d\nu_\rho}^{1/2}(f_1+f_2+f_3+f_4)\frac{d\nu}{d\nu_\rho}^{1/2}\otimes \frac{d\nu}{d\nu_\rho}^{1/2}|g|\frac{d\nu}{d\nu_\rho}^{1/2}\: d\nu_\rho I_{\mathcal H\otimes\mathcal H} \right\|
\\ &\leq \left\| \int_X \frac{d\nu}{d\nu_\rho}^{1/2}(f_1+f_2+f_3+f_4)\frac{d\nu}{d\nu_\rho}^{1/2}\otimes\left\|\frac{d\nu}{d\nu_\rho}\right\|_\infty\|g\|_\infty I_\mathcal H\: d\nu_\rho I_{\mathcal H\otimes\mathcal H}\right\|
\\  &= \left\| \int_X \frac{d\nu}{d\nu_\rho}^{1/2}(f_1+f_2+f_3+f_4)\frac{d\nu}{d\nu_\rho}^{1/2}\: d\nu_\rho I_\mathcal H\right\|\left\|\frac{d\nu}{d\nu_\rho}\right\|_\infty\|g\|_\infty
\\ & = \left\| \int_X f_1+f_2+f_3+f_4 \:d\nu \right\|\left\|\frac{d\nu}{d\nu_\rho}\right\|_\infty\|g\|_\infty
\end{align*}
By taking the infimum over $Pos_f$ we get that $\|f\otimes g\|_1 \leq \left\|\frac{d\nu}{d\nu_\rho}\right\|_\infty\|f\|_1\|g\|_\infty$. 

For arbitrary $g$ the triangle inequality gives
\begin{align*}
\|f\otimes g\|_1 & \leq \|f\otimes Re\: g\|_1 + \|f\otimes Im\: g\|_1
\\ & \leq \left\|\frac{d\nu}{d\nu_\rho}\right\|_\infty\|f\|_1\|Re\: g\|_\infty + \left\|\frac{d\nu}{d\nu_\rho}\right\|_\infty\|f\|_1\|Im\: g\|_\infty
\\ & \leq 2\left\|\frac{d\nu}{d\nu_\rho}\right\|_\infty\|f\|_1\|g\|_\infty.
\end{align*}

For the decomposable norm suppose $S_1,S_2\in L^1_\mathcal H(X,\nu)$ with $S_1,S_2\geq 0$ and
$\left[\begin{matrix}S_1 & f \\ f^* & S_2\end{matrix} \right] \geq 0$. Copying the ideas of Lemma \ref{lemma:absolutevalue} we get that 
\[
\left[\begin{matrix}S_1\otimes |g^*| & f\otimes g \\ f^*\otimes g^* & S_2\otimes |g|\end{matrix} \right] \geq 0.
\]
This implies that by using the previous arguments
\begin{align*}
\|f\otimes g\|_{1,dec} & \leq \max\{ \|S_1\otimes |g^*|\|_1, \|S_2\otimes |g|\|_1\}
\\ & \leq \max\left\{\|S_1\|_1\left\|\frac{d\nu}{d\nu_\rho}\right\|_\infty\||g^*|\|_\infty, \|S_2\|_1\left\|\frac{d\nu}{d\nu_\rho}\right\|_\infty\||g|\|_\infty\right\}
\\ & = \max\{\|S_1\|_1,\|S_2\|_1\}\left\|\frac{d\nu}{d\nu_\rho}\right\|_\infty\|g\|_\infty.
\end{align*}
Taking the infimum over all possible $S_1,S_2$ yields the desired result.
\end{proof}

\vskip 12 pt

For comparison, we end this section with a variation of H\"older's inequality with the Schatten type norm. However, it is a tad unsatisfying as one cannot make the left-hand side behave itself.

\begin{lemma}\label{lemma:holderpp}
If $f,g \in \mathcal L_\mathcal H(X,\nu)$ and $\frac{1}{p} + \frac{1}{q} = 1$ then
\[
\sup_{s\in\mathcal S(\mathcal H)} \left\|\left\| s^{1/2}\frac{d\nu}{d\nu_\rho}^{1/2}f\frac{d\nu}{d\nu_\rho}^{1/2}s\frac{d\nu}{d\nu_\rho}^{1/2}g\frac{d\nu}{d\nu_\rho}^{1/2}s^{1/2} \right\|_{S^1}\right\|_{L^1} \leq \|f\|_{S^p,L^p}\|g\|_{S^q,L^q}
\]
\end{lemma}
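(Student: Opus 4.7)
The plan is to reduce the claim to two applications of H\"older's inequality: first the Schatten version pointwise in $x$, then the classical scalar version in $L^p(X,\nu_\rho)$.

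Write $D(x) := \left(\frac{d\nu}{d\nu_\rho}(x)\right)^{1/2}$ for brevity. The key algebraic observation is the factorization
\[
s^{1/2} D f D\, s\, D g D s^{1/2} = \bigl( s^{1/2} D f D s^{1/2}\bigr)\bigl( s^{1/2} D g D s^{1/2}\bigr),
\]
which holds pointwise a.e.\ since $s^{1/2}s^{1/2}=s$. Setting
\[
A(x) := s^{1/2} D(x) f(x) D(x) s^{1/2}, \qquad B(x) := s^{1/2} D(x) g(x) D(x) s^{1/2},
\]
the operator whose $S^1$-norm we must estimate is simply $A(x)B(x)$.

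The first step is the classical Schatten H\"older inequality in $\mathcal B(\mathcal H)$: for $\tfrac{1}{p}+\tfrac{1}{q}=1$,
\[
\|A(x)B(x)\|_{S^1} \leq \|A(x)\|_{S^p}\|B(x)\|_{S^q} \quad \text{a.e.}
\]
Now integrate with respect to $\nu_\rho$ and apply the classical scalar H\"older inequality on $L^p(X,\nu_\rho)$ and $L^q(X,\nu_\rho)$ to the product of nonnegative measurable functions $\|A(x)\|_{S^p}$ and $\|B(x)\|_{S^q}$:
\begin{align*}
\bigl\|\|A B\|_{S^1}\bigr\|_{L^1}
&\leq \int_X \|A(x)\|_{S^p}\|B(x)\|_{S^q}\, d\nu_\rho(x) \\
&\leq \bigl\|\|A\|_{S^p}\bigr\|_{L^p}\bigl\|\|B\|_{S^q}\bigr\|_{L^q}.
\end{align*}
The right-hand side is bounded above by $\|f\|_{S^p,L^p}\|g\|_{S^q,L^q}$ by the definitions of these seminorms, since taking the supremum of each factor over $\mathcal S(\mathcal H)$ can only increase it.

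Finally, taking the supremum over $s\in \mathcal S(\mathcal H)$ on the left-hand side yields the stated inequality. No significant obstacle is expected: the argument is essentially a double application of H\"older and the only thing to verify carefully is the pointwise measurability and integrability that justify interchanging the supremum and the integration (which is immediate since we estimate the integrand uniformly in $s$ on the right-hand side). The only genuine subtlety is whether the factorization $s = s^{1/2}s^{1/2}$ is legitimate in the formula, which it is since $s\geq 0$ admits a unique positive square root.
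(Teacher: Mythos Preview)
Your proof is correct and follows essentially the same route as the paper: both factor the integrand as the product $A(x)B(x)$ with $A=s^{1/2}Dfs^{1/2}D$ (the paper's $f'$) and $B=s^{1/2}Dgs^{1/2}D$ (the paper's $g'$), and then apply a Schatten-type H\"older/Young inequality pointwise together with a scalar $L^p$--$L^q$ estimate. The only cosmetic difference is that the paper invokes the trace Young inequality $\tr(|f'g'|)/(AB)\le \tr(|f'|^p)/(pA^p)+\tr(|g'|^q)/(qB^q)$ and integrates with the clever choice $A=\|\,\|f'\|_{S^p}\|_{L^p}$, $B=\|\,\|g'\|_{S^q}\|_{L^q}$ to obtain the bound in one stroke, whereas you apply the Schatten H\"older inequality and the classical scalar H\"older inequality as two separate black boxes.
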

\begin{proof}
First label $f' = s^{1/2}\frac{d\nu}{d\nu_\rho}^{1/2}f\frac{d\nu}{d\nu_\rho}^{1/2}s^{1/2}$ and $g' = s^{1/2}\frac{d\nu}{d\nu_\rho}^{1/2}g\frac{d\nu}{d\nu_\rho}^{1/2}s^{1/2}$.
By Young's inequality for the trace norm, where $A,B>0$,
\[
\frac{\tr(|f'(x)g'(x)|)}{AB} \leq \frac{\tr(|f'(x)|^p)}{A^p p} + \frac{\tr(|g'(x)|^q)}{B^q q},
\]
for every $x\in X$.
Hence, for $A=\left(\int_X \tr(|f'|^p)d\nu_\rho\right)^{1/p}$ and $B= \left(\int_X \tr(|g'|^q)d\nu_\rho\right)^{1/q}$ one has
\begin{align*}
\int_X \frac{\tr(|f'g'|)}{AB} d\nu_\rho & \leq \int_X \frac{\tr(|f'|^p)}{A^p p} + \frac{\tr(|g'|^q)}{B^q q} d\nu_\rho
\\ & = \frac{1}{p} + \frac{1}{q} = 1.
\end{align*}
Therefore, the conclusion follows.
\end{proof}

\subsection*{Acknowledgment}
The first author was supported by NSERC Discovery Grant 2019-05430. The second author was partially supported by MacEwan University USRI-Project grants. Both authors would like to thank the reviewer for their insightful comments and corrections.


\begin{thebibliography}{99}

\bibitem{MYI} T. Ando, \textit{Matrix Young Inequalities}, Operator Theory: Advances and Applications {\bf 75}, Birkhauser Basel (1995), 33-38.


\bibitem{BuschOp}
P. Busch, M. Grabowski, and P. Lahti, \textit{Operational quantum physics}, Vol. 2, Springer, 1996.


\bibitem{Dell} G. Dell'Antonio, \textit{Lectures on the mathematics of quantum mechanics II: selected topics}, Atlantis Studies in Mathematical Physics: Theory and Applications {\bf 2}, 2016.

\bibitem{EFZ} J. Erlijman, D. Farenick, and R. Zeng, \textit{Young's inequality in compact operators}, Linear Operators and Matrices, Operator Theory: Advances and Applications {\bf 130}, Springer, 2001, 171-184.

\bibitem{FFP}
D. Farenick, R. Floricel, and S. Plosker,
\textit{Approximately clean quantum probability measures},
 J. Math Phys {\bf 54} (2013), 052201.


\bibitem{FK}
D. Farenick and M. Kozdron,
\textit{Conditional expectation and {B}ayes' rule for quantum random variables and positive operator valued measures},
J. Math. Phys. {\bf 53} (2012), 042201.

\bibitem{FKP}
D. Farenick, M. Kozdron, and S. Plosker,
\textit{Spectra and variance of quantum random variables},
J. Math. Anal. and Appl. {\bf 434} (2016), 1106-1122.
 
\bibitem{FPS}
D. Farenick, S. Plosker, and J. Smith,
\textit{Classical and nonclassical randomness in quantum measurements},
J. Math. Phys. {\bf 52} (2011), 122204.



\bibitem{Haag} U. Haagerup, \textit{Injectivity and decomposition of completely bounded maps}, Operator algebras and their connections with topology and ergodic theory, Lecture Notes in Mathematics {\bf 1132}, Springer, 1985, 170-222

\bibitem{HLLL}
D. Han, D. Larson, B. Liu, and R. Liu, \textit{Operator-valued measures, dilations, and the theory of frames}, Mem. Amer. Math. Soc. {\bf 229} (2014), no. 1075,
viii+84.



\bibitem{JungeRuan} M. Junge, Z.-J. Ruan, \textit{Decomposable maps on non-commutative $L_p$-spaces}, Operator algebras, quantization, and noncommutative geometry, Contemp. Math. {\bf 365}, Amer. Math. Soc., Providence, RI. 2004, 355-381.

\bibitem{MPR} D. McLaren, S. Plosker, and C. Ramsey, \textit{On operator valued measures}, Houst. J.
Math. {\bf 46} (2020), 201-226.

\bibitem{Pisier} G. Pisier, Tensor Products of C$^*$-Algebras and Operator Spaces: The Connes-Kirchberg Problem, Cambridge Univ. Press, 2020.

\bibitem{PR1} S. Plosker and C. Ramsey, \textit{An operator-valued Lyapunov theorem}, J. Math. Anal. App. {\bf 469} (2019), 117-125.

\bibitem{PR2} S. Plosker and C. Ramsey, \textit{Bistochastic operators and quantum random variables}, preprint arXiv:2005.00005 (2020).



\end{thebibliography}
\end{document}